\documentclass[12pt]{article}

\usepackage[T1]{fontenc}
\usepackage{avant}
\usepackage[cp1250]{inputenc}
\usepackage{amsmath,amssymb,amsthm}

\newtheorem{theorem}{Theorem}[section]
\newtheorem{lemma}[theorem]{Lemma}

\theoremstyle{definition}
\newtheorem{definition}[theorem]{Definition}
\newtheorem{corollary}[theorem]{Corollary}
\newtheorem{proposition}[theorem]{Proposition}

\theoremstyle{remark}

\numberwithin{equation}{section}

\begin{document}

\title{On solvable compact Clifford-Klein forms}
\author{Maciej Boche\'nski, Aleksy Tralle}

\maketitle{}

\begin{abstract}
In this article we prove that under certain assumptions,  a reductive homogeneous space $G/H$ does not admit  a solvable compact Clifford-Klein form. This generalizes the well known non-existence theorem of Benoist for nilpotent Clifford-Klein forms. This generalization  works for a particular class of homogeneous spaces determined by  ``very regular''  embeddings of $H$ into $G$.

\end{abstract}

\tableofcontents

\section{Introduction}
This article is motivated by a circle of problems related to Clifford-Klein forms of homogeneous spaces \cite{b, iw, kas, ko, kob1, kdef, ko1, ko2, lazim, lamo, ow, w, zim}. Assume that we are given a homogeneous space $G/H$ of a (reductive) real Lie group $G$ and a closed subgroup $H\subset G$. We say that $G/H$ admits a compact Clifford-Klein form, if there exists a discrete subgroup $\Gamma\subset G$ which acts properly and co-compactly on $G/H$ by left translations (that is, the double coset $\Gamma\setminus G/H$ is compact). Throughout this paper, we will use also the terminology of \cite{lazim} saying that $\Gamma$ is a co-compact lattice in $G/H$, by analogy with lattices in Lie groups.  

If $G$ is reductive and $H$ is compact then the space $G/H$ always admits a compact Clifford-Klein form. One can simply choose a co-compact lattice $\Gamma \subset G.$ On the other hand if $H$ is a non-compact subgroup, the space $G/H$ does not necessarily admit compact Clifford-Klein forms.

One of the important and challenging problems in the whole area is  Kobayashi's conjecture, which we now describe (see Conjecture 3.3.10 in \cite{ko}). Assume that $G/H$ is a homogeneous space of reductive type (see Definition \ref{def:red-type}). We say that $G/H$ admits a standard Clifford-Klein form, if there exists a reductive Lie subgroup $L\subset G$ such that $L$ acts properly on $G/H$ and $L\setminus G/H$ is compact. The Kobayashi's conjecture states that for semisimple real Lie groups $G$ and homogeneous spaces $G/H$ of reductive type, the existence of a compact Clifford-Klein form on $G/H$ implies the existence of a standard one. Note that the conjecture does not say that all compact Clifford-Klein forms are standard. There are examples of non-standard ones (see \cite{kas}, \cite{kdef}), obtained by a deformation of the latter. 
\noindent
In fact all known examples of standard Clifford-Klein forms  can be obtained in the following way. Assume that there exists a reductive Lie subgroup $L\subset G$ such that $G=L\cdot H$,  and $L\cap H$ is compact. Under these assumptions we see that 
\begin{itemize}
\item $L$ acts transitively on $G/H$, and, therefore there is a diffeomorphism $G/H\simeq L/(L\cap H)$
\item since $L\cap H$ is compact, any co-compact lattice $\Gamma\subset L$ acts properly and co-compactly on $L/(L\cap H)$, and, hence, on $G/H$.
\end{itemize}
Notice that the Kobayashi conjecture indicates that compact Clifford-Klein forms of non-compact semisimple homogeneous spaces $G/H$ of reductive type are rare and of a special nature. Our motivation  is to eliminate the possibility of obtaining Clifford-Klein forms from double quotients of connected subgroups that are solvable, and, therefore, to obtain further evidence for the conjecture. In this context, an important starting point is a theorem of Benoist \cite{b}, which shows that {\it nilpotent groups} cannot yield Clifford-Klein forms. In greater detail, the following holds. 
\begin{theorem}[Benoist]\label{thm:benoist} Assume that $G/H$ is a non-compact homogeneous space of a semisimple real Lie group. If a nilpotent subgroup ${N}\subset G$ acts properly on $G/H$, then $N\setminus G/H$ cannot be compact.
\end{theorem}

\noindent
Our main result is the following.
\begin{theorem} 
Let $G/H$ be a homogeneous space such that $G$ is a connected semisimple linear Lie group of non-compact type and $H$ is the semisimple part of the Levi factor of some parabolic subgroup of $G$. If $\Gamma\setminus G/H$ is a compact Clifford-Klein form, then $\Gamma$ cannot be (virtually) solvable.

\label{thm:Gamma}
\end{theorem} 
Throughout this work we denote by $SL(n,\mathbb{R})/SL(m,\mathbb{R})$ the homogeneous space determined by the standard embedding of $SL(m,\mathbb{R})$ into $SL(n,\mathbb{R})$ of the form $A\rightarrow\operatorname{diag}(A,E_{n-m})$, where $E_{n-m}$ is the identity matrix. In a similar way, one can describe the embedding of the exceptional Lie group $E_7^{V}$ into $E_8^{VIII}$.
\begin{corollary}
The following homogeneous spaces do not admit virtually solvable compact Clifford-Klein forms
$$SL(n,\mathbb{R})/SL(m,\mathbb{R}), \ E_{8}^{VIII}/E_{7}^{V}, \ n>m>1.$$
\end{corollary}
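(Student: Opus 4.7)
The strategy combines Benoist's Cartan-projection criterion \cite{b} for proper actions with Kobayashi's cocompactness criterion (Theorem~\ref{kk1}) and a structural analysis of virtually solvable subgroups of semisimple Lie groups. Fix an Iwasawa decomposition $G=KAN$, set $\mathfrak{a}=\operatorname{Lie}(A)$, and let $\mu_G\colon G\to \overline{\mathfrak{a}^+}$ be the Cartan projection. By Benoist, a closed subgroup $L\subset G$ acts properly on $G/H$ iff $\mu_G(L)\cap(\mu_G(H)+C)$ is bounded for every compact $C\subset \mathfrak{a}$, and by Theorem~\ref{kk1} the action is cocompact iff additionally $\mu_G(L)+\mu_G(H)$ covers $\overline{\mathfrak{a}^+}$ up to a bounded set. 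For $H$ the semisimple part of the Levi of the standard parabolic $P_\Theta$ with $\Theta\subset\Pi$, the $\mathbb{R}$-split Cartan $\mathfrak{a}_H\subset\mathfrak{a}$ is spanned by the coroots of $\Theta$ and $\mu_G(H)$ is the image of the $H$-positive chamber $\mathfrak{a}_H^+$ after Weyl-folding into $\overline{\mathfrak{a}^+}$. Setting $\mathfrak{a}_\Theta=\bigcap_{\alpha\in\Theta}\ker\alpha$, the decomposition $\mathfrak{a}=\mathfrak{a}_H\oplus\mathfrak{a}_\Theta$ translates cocompactness into the requirement that the $\mathfrak{a}_\Theta$-projection of $\mu_G(\Gamma)$ be cobounded in the corresponding face of $\overline{\mathfrak{a}^+}$.

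To control $\mu_G(\Gamma)$, I pass to a torsion-free solvable finite-index subgroup of $\Gamma$ and take its Zariski closure $\overline{\Gamma}^Z$, a solvable real algebraic subgroup of $G$. By the real Jordan decomposition and Lie--Kolchin, $\overline{\Gamma}^Z$ decomposes (up to a compact torus) as a semidirect product $A_\Gamma\ltimes U_\Gamma$ of an $\mathbb{R}$-split torus $A_\Gamma$ and a unipotent group $U_\Gamma$; after conjugation we may place $A_\Gamma\subset A$ and $U_\Gamma\subset N$. Writing $\mathfrak{t}_\Gamma=\operatorname{Lie}(A_\Gamma)$, the Cartan projection $\mu_G(\Gamma)$ stays within a bounded neighborhood of $W\cdot\mathfrak{t}_\Gamma\cap\overline{\mathfrak{a}^+}$ supplemented by logarithmically growing asymptotic directions coming from $U_\Gamma$, namely the highest roots appearing in the $\operatorname{ad}(\mathfrak{t}_\Gamma)$-weight decomposition of $\mathfrak{u}_\Gamma=\operatorname{Lie}(U_\Gamma)$. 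Because $\mathfrak{u}_\Gamma\subset\mathfrak{n}$ is assembled from root spaces $\mathfrak{g}_\alpha$ for $\alpha\in\Sigma^+$, those asymptotic directions lie in the Weyl orbit of specific fundamental coweights attached to the roots generating $\mathfrak{u}_\Gamma$.

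The contradiction then arises by pitting properness against cocompactness. Properness forces $W\cdot\mathfrak{t}_\Gamma$ and every unipotent asymptotic direction of $U_\Gamma$ to avoid $\mu_G(H)$, while cocompactness forces their union to cover the $\mathfrak{a}_\Theta$-face of $\overline{\mathfrak{a}^+}$. The parabolic-type hypothesis on $H$ enters essentially here: since $\mathfrak{a}_H$ is spanned by the simple coroots of $\Theta$, the set $\mu_G(H)$ occupies a position in $\overline{\mathfrak{a}^+}$ (the Weyl fold of $\mathfrak{a}_H^+$) that interlocks with exactly the asymptotic directions any non-trivial unipotent supplement $U_\Gamma\subset N$ produces. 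A prototype calculation in $SL(n,\mathbb{R})/SL(m,\mathbb{R})$ already shows that each unipotent element of $G$ has Cartan projection aligned with the direction of $\mu_G(H)$, so $U_\Gamma\neq 1$ already violates properness; this forces $\overline{\Gamma}^Z$ into $M\cdot A_\Gamma$ (with $M=Z_K(\mathfrak{a})$), and then $W\cdot\mathfrak{t}_\Gamma$ cannot cover the $\mathfrak{a}_\Theta$-face without itself meeting $\mu_G(H)$. The main obstacle is carrying out this directional analysis uniformly across all pairs $(G,H)$ of parabolic type: one needs a case-independent description of the position of $\mu_G(H)$ relative to the walls of $\overline{\mathfrak{a}^+}$ and of the asymptotic directions of $\mu_G(U_\Gamma)$, probably via the Benoist-type fine estimates on Cartan projections of unipotent orbits combined with the highest-weight combinatorics of the simple system $\Theta\subset\Pi$.
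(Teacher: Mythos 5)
Your proposal does not follow the paper's route. In the paper this corollary is a one\--line consequence of Theorem~\ref{thm:Gamma}: the pairs $(SL(m,\mathbb{R}),SL(n,\mathbb{R}))$ and $(E_7^{V},E_8^{VIII})$ are of parabolic type, and the theorem is proved not by an asymptotic analysis of Cartan projections but by the unimodularity obstruction: the syndetic hull $B$ of $\Gamma$ is a connected solvable group containing a lattice, hence unimodular, while the factorization $AN=B\cdot A_hN_h$ forced by properness and cocompactness, together with the root\--space description of $\mathfrak{n}_h$ for parabolic\--type $\mathfrak{h}$ and the $W_h$\--invariance of $\xi=\sum_{\alpha\in\Delta_x^+}\dim(\mathfrak{g}_\alpha)\,\alpha$, produces $Y\in\mathfrak{t}$ with $\mathrm{Tr}(\mathrm{ad}_Y|_{\mathfrak{b}})\neq 0$, a contradiction. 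Your plan is a genuinely different strategy, but as written it has gaps that are not cosmetic.

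First, the decisive step is explicitly not carried out: you acknowledge that the ``directional analysis'' pitting properness against cocompactness must be done ``uniformly across all pairs,'' and that is precisely the content of the theorem, so the proposal is a program rather than a proof. Second, at least one asserted step is false. For $SL(n,\mathbb{R})/SL(m,\mathbb{R})$ you claim that \emph{every} unipotent element of $G$ has Cartan projection aligned with $\mu_G(H)$, so that $U_\Gamma\neq 1$ already violates properness. By Kostant's theorem the asymptotic direction of $\mu(\exp(tX))$ is the dominant (Weyl\--folded) representative of the semisimple member $H_X$ of a Jacobson--Morozov triple through $X$; for a regular unipotent $X$ in $SL(n,\mathbb{R})$ this direction is $(n-1,n-3,\dots,-(n-1))$ up to scale, which for $n$ even and $m=n-1$ is not of the form $\mathrm{sort}(a_1,\dots,a_{m},0)$ and hence does not lie in $\mu_G(H)$. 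So nontrivial unipotents in $\Gamma$ are perfectly compatible with properness; the only conclusion available (and the only one the paper draws from Kostant's theorem) is $\mathfrak{u}\cap\mathfrak{n}_h=\{0\}$. Third, your cocompactness criterion is misquoted: Theorem~\ref{kk1} is the dimension identity $d(G)=d(L)+d(H)$, not an ``iff'' in terms of $\mu_G(L)+\mu_G(H)$ covering $\overline{\mathfrak{a}^+}$ (the covering statement is only a necessary condition, though that direction would suffice for a contradiction). Finally, the claim that $\mu_G(\Gamma)$ is controlled by $W\cdot\mathfrak{t}_\Gamma$ plus ``logarithmic unipotent directions'' is exactly the hard computation occupying the Oh--Witte Morris papers on Cartan\--decomposition subgroups; it cannot be asserted without proof, and the paper's argument is designed precisely to avoid it.
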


Notice that $SL(n,\mathbb{R})/SL(m,\mathbb{R})$ is an important ``test example'' in the theory of compact Clifford-Klein forms. The first results concerning co-compact lattices of these spaces were due to Kobayashi \cite{kobr1, kobr2, kobr3}. Benoist proved in \cite{b} that the homogeneous space $SL(2n+1,\mathbb{R})/SL(2n,\mathbb{R}),$ $n>1$ does not admit compact Clifford-Klein forms at all. The general case $SL(n,\mathbb{R})/SL(m,\mathbb{R})$ is still open and of significant interest (see \cite{b}, \cite{lazim}, \cite{lamo}, \cite{sha} for partial results).

\section{Preliminaries}
Throughout this article we use the basics of Lie theory without further explanations closely following \cite{ov}. Also we denote the Lie algebras of Lie groups $G$,$H$,... by the corresponding Gothic letters $\mathfrak{g},\mathfrak{h}$,... etc. As usual, the symbol $\mathfrak{g}^c$ denotes the complexification of a real Lie algebra $\mathfrak{g}$, the centralizer of a Lie subalgebra $\mathfrak{a}$ in $\mathfrak{g}$ is denoted by $\mathfrak{z}_{\mathfrak{g}}(\mathfrak{a})$. We also use relations between real Lie groups and algebraic groups following \cite{mar, ov2, pr, w-book, vgs}. 

\noindent
Let $G$ be a real connected linear semisimple Lie group with the Lie algebra $\mathfrak{g}$ and $H\subset G$ a closed connected subgroup with the Lie algebra $\mathfrak{h}$. The Lie algebra $\mathfrak{g}$ has a Cartan decomposition (and the corresponding Cartan involution)
$$\mathfrak{g}=\mathfrak{k}+\mathfrak{p}$$
where $\mathfrak{k}$ is a maximal compact Lie subalgebra. Choose a maximal abelian subspace $\mathfrak{a}\subset\mathfrak{p}$. Note that all such subalgebras are conjugate in $\mathfrak{g}$. There is a maximal abelian subalgebra $\mathfrak{t}\subset\mathfrak{g}$ (called the {\it split Cartan subalgebra}) of the form

 $$\mathfrak{t}=\mathfrak{t}_{\mathfrak{k}}+\mathfrak{a},$$
where $\mathfrak{t}_{\mathfrak{k}}$ denotes a maximal abelian subalgebra of $\mathfrak{z}_{\mathfrak{k}}(\mathfrak{a})$. Then $\mathfrak{t}^c$ is a Cartan subalgebra of the complexification $\mathfrak{g}^c$, and, therefore, determines the root system $\Sigma$ of $\mathfrak{g}^c$. The (non-zero) restrictions of $\alpha\in\Sigma$ on $\mathfrak{a}$ form a system of restricted roots $\Delta$ (which is an ``abstract'' root system itself). In this article we will use only restricted roots. Therefore, throughout this paper we will call them ``real roots''. Recall that the real root decomposition is given by the formula
 
  $$\mathfrak{g}=\mathfrak{t}_{\mathfrak{k}}+\mathfrak{a}+\sum_{\alpha\in\Delta}\mathfrak{g}_{\alpha},$$
  where $\mathfrak{g}_{\alpha}$ are the weight subspaces of the adjoint representation of $\mathfrak{a}$ ($\mathfrak{g}_{\alpha}$ need not to be one-dimensional). 
 Note that we will use the basics of the theory of root systems in the real case. 

The Weyl group $W$ of $\mathfrak{g}$ is the finite group of orthogonal transformations of $\mathfrak{a}$ (with respect to the Killing form of $\mathfrak{g}$) generated by reflections in hyperplanes $C_{\alpha}:=\{ X\in \mathfrak{a} \ | \ \alpha (X)=0  \}$ for $\alpha \in \Delta.$ The following is well known.

\begin{proposition}[\cite{ov}, Proposition 4.2, Chapter 4]\label{prop:weyl}
The group $W$ coincides with the group of transformations induced by automorphisms $\operatorname{Ad}(k)$ $(k\in N_{K}(\mathfrak{a}))$ and also with the group of transformations induced by automorphisms $\operatorname{Ad}(g)$ $(g\in N_{G}(\mathfrak{a})).$ Therefore
$$W\cong N_{K}(\mathfrak{a})/Z_{K}(\mathfrak{a})\cong N_{G}(\mathfrak{a})/Z_{K}(\mathfrak{a}).$$
\end{proposition}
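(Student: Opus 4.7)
The plan is to study the restriction homomorphism $\rho : N_G(\mathfrak{a}) \to \mathrm{GL}(\mathfrak{a})$ defined by $\rho(g) = \mathrm{Ad}_g|_{\mathfrak{a}}$, and its sub-homomorphism on $N_K(\mathfrak{a})$. The first observation is that $\rho(g)$ is always an orthogonal transformation of $(\mathfrak{a}, B|_{\mathfrak{a}\times\mathfrak{a}})$, because the Killing form $B$ is $\mathrm{Ad}$-invariant and $\mathrm{Ad}_g$ preserves $\mathfrak{a}$ for $g\in N_G(\mathfrak{a})$. Moreover, $\mathrm{Ad}_g$ sends $\mathfrak{g}_\alpha$ to $\mathfrak{g}_{\alpha\circ\rho(g)^{-1}}$, so $\rho(g)$ permutes $\Delta$. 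Thus the image lies in the finite group $\mathrm{Aut}(\mathfrak{a},\Delta)$ of root-system automorphisms, and what must be shown is that the image actually lands in $W\subseteq \mathrm{Aut}(\mathfrak{a},\Delta)$.

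Next I would prove the two inclusions realizing $W$. For $W\subseteq \rho(N_K(\mathfrak{a}))$, it suffices to produce a representative of each simple reflection $s_\alpha$ inside $K$. For $\alpha\in\Delta$, choose a nonzero $E_\alpha\in\mathfrak{g}_\alpha$; then $\theta E_\alpha\in\mathfrak{g}_{-\alpha}$ (where $\theta$ is the Cartan involution), and $X_\alpha := E_\alpha - \theta E_\alpha \in \mathfrak{k}$. Working inside the $\theta$-stable $\mathfrak{sl}_2$-triple spanned by $E_\alpha,\theta E_\alpha,[E_\alpha,\theta E_\alpha]$ (whose real form is isomorphic to $\mathfrak{sl}(2,\mathbb{R})$ or $\mathfrak{su}(2)$ depending on signs), one computes $t\in\mathbb{R}$ such that $k_\alpha := \exp(tX_\alpha)\in K$ normalizes $\mathfrak{a}$ and satisfies $\rho(k_\alpha) = s_\alpha$. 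Since the $s_\alpha$ generate $W$ by definition, this gives $W\subseteq \rho(N_K(\mathfrak{a}))\subseteq \rho(N_G(\mathfrak{a}))$. For the reverse $\rho(N_G(\mathfrak{a}))\subseteq W$, fix a positive system $\Delta^+$ with associated chamber $C$. Given $g\in N_G(\mathfrak{a})$, the permutation $\rho(g)$ sends $C$ to another chamber $\rho(g)C$, and by transitivity of $W$ on chambers there is $w\in W$ with $w\rho(g)C = C$. Then $w\rho(g)$ preserves $\Delta^+$; combining this with the $\mathfrak{sl}_2$-triple construction (which shows $w$ itself arises from an element $k_w\in N_K(\mathfrak{a})$), one replaces $g$ by $k_w g$ and reduces to the case that $\rho(g)$ fixes $C$ pointwise, whence $\rho(g)=\mathrm{id}$ because a regular element of $C$ has centralizer exactly $\mathfrak{z}_{\mathfrak{k}}(\mathfrak{a})+\mathfrak{a}$.

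Finally, the kernel of $\rho|_{N_K(\mathfrak{a})}$ is by definition $Z_K(\mathfrak{a})$, yielding $N_K(\mathfrak{a})/Z_K(\mathfrak{a})\cong W$; for the second isomorphism one uses the decomposition $N_G(\mathfrak{a}) = N_K(\mathfrak{a})\cdot Z_G(\mathfrak{a})$ inherited from the Cartan decomposition of $G$, together with $Z_G(\mathfrak{a}) = Z_K(\mathfrak{a})\cdot A$, to identify the stated quotient. The main obstacle is the reverse inclusion $\rho(N_G(\mathfrak{a}))\subseteq W$: a priori $\rho(g)$ could induce an outer symmetry of the restricted Dynkin diagram, and ruling this out is precisely where one genuinely needs the connectedness of $G$ and the explicit $\mathfrak{sl}_2$-triple construction producing representatives of every reflection. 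Once that step is secured, the remaining bookkeeping is routine.
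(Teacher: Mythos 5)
The paper does not prove this proposition at all---it is imported verbatim from Onishchik--Vinberg---so there is no in-paper argument to compare with; your outline follows the standard textbook route, but the step you yourself single out as ``the main obstacle'' is never actually closed. After replacing $g$ by $k_w g$ you only know that $\rho(g)$ preserves the chamber $C$ \emph{setwise} (equivalently, preserves $\Delta^+$); the passage to ``fixes $C$ pointwise'' is precisely the assertion that $\rho(g)$ is not a nontrivial diagram automorphism, and no argument is given for it. The reason you offer---that a regular element of $C$ has Lie-algebra centralizer $\mathfrak{z}_{\mathfrak{k}}(\mathfrak{a})+\mathfrak{a}$---does not suffice: first one must produce a regular element of $C$ actually fixed by $\rho(g)$ (average a regular $H_0$ over the finite cyclic group $\langle\rho(g)\rangle$, using convexity of $C$); and even then the Lie-algebra computation only controls the identity component of $Z_G(H_1)$, whereas $g$ may lie in another component (already for $G=SL(2,\mathbb{R})$ one has $Z_G(H_1)=MA$ with $M=\{\pm I\}$, a disconnected group). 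What is really needed is the global statement $Z_G(H_1)=Z_K(\mathfrak{a})\cdot A$ for regular $H_1$, and proving it is the genuine content of the hard inclusion (e.g.\ via the global Cartan decomposition of the $\theta$-stable subgroup $Z_G(H_1)$ together with a reduction to connectedness of centralizers in a compact connected group, or via a Bruhat-type counting argument). Note also that having representatives in $N_K(\mathfrak{a})$ of every reflection cannot by itself rule out diagram automorphisms---those are exactly the root-system automorphisms \emph{outside} the reflection group---so the closing sentence of your proposal mislocates where connectedness is actually used.

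Two smaller points. The element $E_\alpha-\theta E_\alpha$ lies in $\mathfrak{p}$, not $\mathfrak{k}$ (it is a $(-1)$-eigenvector of $\theta$); the correct generator is $E_\alpha+\theta E_\alpha$, and the $\theta$-stable three-dimensional subalgebra containing it is always a copy of $\mathfrak{sl}(2,\mathbb{R})$ in this setting, never $\mathfrak{su}(2)$, since it contains the nonzero element $[E_\alpha,\theta E_\alpha]\in\mathfrak{a}\subset\mathfrak{p}$. Finally, your own identities $N_G(\mathfrak{a})=N_K(\mathfrak{a})\cdot A$ and $Z_G(\mathfrak{a})=Z_K(\mathfrak{a})\cdot A$ show that the quotient isomorphic to $W$ is $N_G(\mathfrak{a})/Z_G(\mathfrak{a})$, not $N_G(\mathfrak{a})/Z_K(\mathfrak{a})$ as displayed in the proposition (the latter contains a copy of $A$ whenever $\mathrm{rank}_{\mathbb{R}}\,\mathfrak{g}>0$); that appears to be a typo in the statement rather than an error of yours, but a complete proof should say which quotient it establishes.
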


\noindent
The Weyl chamber determined by a set of all positive roots with respect to some fixed $\mathfrak{a}$ is denoted by $\mathfrak{a}^+$.  

\begin{definition}\label{def:red-type}
Let $G$ be a connected real semisimple linear Lie group and $H$ be a closed subgroup with finitely many connected components. The homogeneous space $G/H$ is of {\it reductive type}, if there exists a Cartan involution $\theta$ of $\mathfrak{g}$ such that $\theta (\mathfrak{h})=\mathfrak{h}.$
\end{definition}
\noindent Notice  that if $G/H$ is of reductive type then $H$ is a reductive Lie group. 
Also, we use a relation between Lie groups and linear algebraic $\mathbb{R}$-groups (see \cite{mar}).  If ${\bf {G}}\subset GL(n,\mathbb{C})$ is an algebraic $\mathbb{R}$-group, then $G={\bf{G}_{\mathbb{R}}}={\bf G}\cap GL(n,\mathbb{R})$ is a Lie group with a finite number of connected components.

Let $X$ be a Hausdorff topological space and $\Gamma$ a topological group acting on $X$. We say that an action $\Gamma$ on $X$ is {\it proper}, if for any compact subset $S\subset X$ the set
$$\{\gamma\in\Gamma\,|\,\gamma (S)\cap S\not=\emptyset\}$$
is compact. In particular, in this article we consider the proper actions of $\Gamma\subset G$ on $X=G/H$ by left translations. 
It easily follows from the definition that if a closed connected Lie subgroup $L\subset G$ acts properly on $G/H$ then any orbit of $L$ is closed.

We will also need the following results on proper actions. For a connected Lie group $J$ define $d(J):=\dim\,J-\dim\,K_{J},$ where $K_{J}$ is a maximal compact subgroup of $J.$

\begin{theorem}[\cite{ko2}, Theorem 4.7 and \cite{ow}, Theorem 3.4]
Let $A\subset G$ and $B \subset G$ be closed connected subgroups of $G.$ If $A$ acts properly on $G/B$, then
$$A\setminus G/B \,\,\textrm{is compact if and only if}\,\, d(G)=d(A)+d(B).$$

\label{kk1}
\end{theorem}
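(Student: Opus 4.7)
The plan is to translate compactness of the double coset $A\setminus G/B$ into a dimension condition on associated Riemannian symmetric spaces. Fix a Cartan involution $\theta$ of $\mathfrak{g}$ and, in the reductive case treated by Kobayashi \cite{ko2}, conjugate so that $\theta$ stabilizes $\mathfrak{a}$ and $\mathfrak{b}$; then with $K = K_\theta$, $K_A := K \cap A$, $K_B := K \cap B$ are maximal compact subgroups, and $d(G), d(A), d(B)$ equal the dimensions of the symmetric spaces $G/K$, $A/K_A$, $B/K_B$. The global Cartan decomposition $G = K\exp(\mathfrak{p})$ and its analogues for $A$ and $B$ yield a $K$-equivariant diffeomorphism $G/B \approx (K/K_B) \times \mathbb{R}^{d(G)-d(B)}$, in which the Euclidean factor captures precisely the non-compact directions of $G/B$.

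The forward direction ($\Rightarrow$) then follows by dimension counting. Since $A$ acts properly, its generic orbits have non-compact dimension $d(A)$; compactness of $A\setminus G/B$ forces these orbits to fill the non-compact factor of $G/B$, giving $d(A) \geq d(G) - d(B)$, with the reverse inequality holding by dimensional constraints. For the converse ($\Leftarrow$), I would work with the Cartan projection $\mu \colon G \to \overline{\mathfrak{a}^+}$ coming from the $KAK$-decomposition. Properness of the $A$-action on $G/B$ forces the cones $\mu(A)$ and $\mu(B)$ to be asymptotically transverse in $\overline{\mathfrak{a}^+}$; the hypothesis $d(G)=d(A)+d(B)$ translates into the complementary fact that these two cones jointly cover $\overline{\mathfrak{a}^+}$ modulo a compact set, from which one produces a compact $S\subset G$ satisfying $G = ASB$, which is exactly compactness of $A\setminus G/B$. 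To handle closed connected subgroups that are not reductive, as in the extension of Oh--Witte \cite{ow}, one applies a thickening argument: the unipotent radicals of the algebraic envelopes of $A$ and $B$ contribute neither to $d(\cdot)$ nor to the asymptotic Cartan geometry, and can be absorbed into $S$.

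The main obstacle is the converse direction, namely upgrading a dimensional equality into an explicit double-coset covering of $G$. This is significantly more delicate than the dimension-counting argument for the forward implication, since it requires the full machinery of the Cartan projection and exploits the rigidity imposed by properness; in particular it has the flavour of ``finite co-volume implies cocompactness'' but cannot appeal to any lattice-theoretic input. The non-reductive extension of Oh--Witte adds a further technical layer, as one must control unipotent parts carefully enough that the Cartan-projection estimates from the reductive case continue to apply.
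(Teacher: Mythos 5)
First, a point of comparison: the paper does not prove this statement. Theorem \ref{kk1} is quoted from Kobayashi (\cite{ko2}, Theorem 4.7) and Oh--Witte Morris (\cite{ow}, Theorem 3.4), so there is no internal proof to measure your argument against; it has to stand on its own, and it does not. The most concrete error is your treatment of the non-reductive case: you claim the unipotent radicals ``contribute neither to $d(\cdot)$ nor to the asymptotic Cartan geometry, and can be absorbed into $S$.'' A unipotent (more generally, a simply connected solvable) group has trivial maximal compact subgroup, so $d(N)=\dim N$: unipotent parts contribute \emph{fully} to $d$, their Cartan projections are unbounded (by Kostant's theorem, quoted later in the paper, $\mu(\tilde N)$ agrees with $\mu(\tilde A)$ for an Iwasawa decomposition $\tilde K\tilde A\tilde N$), and a non-compact subgroup cannot be absorbed into a compact set. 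The paper applies Theorem \ref{kk1} precisely to solvable subgroups such as $B\subset AN$ and to $A_hN_h$, using $d(A_hN_h)=\dim A_h+\dim N_h=d(H)$ and $d(AN)=d(G)$; the motivating example $AN\subset SO(1,4)$ acting co-compactly on $SO(4,4)/SO(3,4)$ works only because the nilpotent factor carries $\dim N$ units of $d$. So the part of the statement the paper actually needs is exactly the part your sketch gets wrong.

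Both implications also have genuine gaps even in the reductive case. In the forward direction, ``compactness forces the orbits to fill the non-compact factor, giving $d(A)\ge d(G)-d(B)$'' is the right intuition but not an argument (one needs, e.g., a cohomological-dimension or coarse-volume comparison to exclude covering a space of non-compact dimension $d(G)-d(B)$ by $A$-translates of a compact set when $d(A)$ is smaller), and the reverse inequality ``by dimensional constraints'' is simply asserted. In the converse direction you conflate two different invariants: $d(A)$ and $d(B)$ are dimensions of associated Riemannian symmetric spaces, whereas $\mu(A)$ and $\mu(B)$ are cones in $\mathfrak{a}$ of dimension at most $\mathrm{rank}_{\mathbb{R}}A$ and $\mathrm{rank}_{\mathbb{R}}B$. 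The equality $d(G)=d(A)+d(B)$ does not ``translate into'' a covering $\overline{\mathfrak{a}^+}\subset\mu(A)+\mu(B)+C$; these are a priori unrelated conditions, and their equivalence under properness is essentially the theorem you are trying to prove. Even granting the cone covering, extracting a compact $S$ with $G=ASB$ is a substantive result (Benoist's compactness criterion; the Cartan projection is only subadditive up to Weyl-group and convexity phenomena), not a formal consequence. If you want an actual proof, follow Kobayashi's route through the decomposition $G/B\cong K/K_B\times\mathbb{R}^{d(G)-d(B)}$ and an orbit analysis there, and for the Oh--Witte extension treat the unipotent directions as genuinely non-compact rather than discarding them.
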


Let $\mathfrak{g}=\mathfrak{k}\oplus\mathfrak{p}$ be a Cartan decomposition, and let $\Delta^+$ be a subset of positive roots of the real root system $\Delta$ with respect to a fixed $\mathfrak{a}\subset\mathfrak{p}$. Set

$$\mathfrak{n}=\sum_{\alpha\in\Delta^+}\mathfrak{g}_{\alpha}.$$
One can easily see that $\mathfrak{n}$ is a real nilpotent subalgebra of $\mathfrak{g}$. Also, $\mathfrak{a}+\mathfrak{n}$ is a solvable subalgebra of $\mathfrak{g}$. One obtains the Iwasawa decomposition
$$\mathfrak{g}=\mathfrak{k}+\mathfrak{a}+\mathfrak{n},$$
where $\mathfrak{k}$ is a maximal compact subalgebra of $\mathfrak{g}$. 
It follows from Definition \ref{def:red-type}, that if $G/H$ is of reductive type, then $\mathfrak{h}$ admits a Cartan decomposition compatible with that of $\mathfrak{g}$:
$$\mathfrak{h}=\mathfrak{k}_h+\mathfrak{p}_h,\ \mathfrak{k}_h=\mathfrak{k}\cap\mathfrak{h}, \ \mathfrak{p}_h=\mathfrak{h}\cap\mathfrak{p}.$$
In the same way, $\mathfrak{h}$ admits a {\it compatible Iwasawa decomposition}
$$\mathfrak{h}=\mathfrak{k}_h+\mathfrak{a}_h+\mathfrak{n}_h, \ \mathfrak{n}_h=\mathfrak{n}\cap\mathfrak{h}, \ \mathfrak{a}_h=\mathfrak{a}\cap\mathfrak{h}.$$ 
 
Now let $G$ be a connected semisimple Lie group whose Lie algebra is $\mathfrak{g}$. There exists a connected compact Lie subgroup $K\subset G$ and simply connected Lie subgroups $A$ and $N$ whose Lie algebras are $\mathfrak{a}$ and $\mathfrak{n}$ such that
$$G=K\cdot A\cdot N$$
is a topological decomposition into a direct product of subgroups. This decomposition is the (global) Iwasawa decomposition of $G$. In the same manner, we obtain the {\it compatible} global Iwasawa decomposition of $H$: $H=K_h\cdot A_h\cdot N_h$ (the meaning of the symbols is clear). We need  also one more decomposition, the Cartan decomposition, $G=K\cdot A\cdot K$. 

Consider $G$ as a semisimple group of $\mathbb{R}$-points of an algebraic $\mathbb{R}$-group ${\bf G}$, and $A$ as a subgroup of $\mathbb{R}$-points of a maximal algebraic torus ${\bf A}$. Then one can use the root system $\Delta=\Delta({\bf A},{\bf G})$ of ${\bf G}$ with respect to ${\bf A}$ and define the global Weyl chamber

 $$A^+=\{a\in A\,|\,\chi(a)>0,\,\text{for any}\,\chi\in\Delta^+\}.$$
 For more details we refer to \cite{b}. Note that we use the same symbol to denote the root systems for ${\bf G}$ and $\mathfrak{g}$.  This easily yields the decomposition
 $$G=K\cdot A^+\cdot K$$
 (which is also called the Cartan decomposition). Thus, for any element $g\in G$ there is an element $a_g\in A^+$ such that $g\in K\cdot a_g\cdot K$. This element is unique, hence there is a well defined map $\mu: G\rightarrow A^+$ given by the formula
 $$\mu(g)=a_g,$$
called the Cartan projection. The function $\mu$ is continuous and proper (that is, the preimage of a compact set is compact). Also, using the diffeomorphism $\log: A^+\rightarrow \mathfrak{a}^+$ one obtains a proper map $\mu: G\rightarrow \mathfrak{a}^+$, where $\mathfrak{a}^+\subset\mathfrak{a}$ is a closed Weyl chamber in $\mathfrak{a}$.  Note that we will denote both maps by the same letter, because we will use it only as a map with $\mathfrak{a}^+$ as a target.

We will need the following characterization of a properness of an action of a subgroup on a homogeneous space.
\begin{theorem}[\cite{b}, Corollary 5.2 and \cite{kob1}, Theorem 1.1]\label{thm:c-proj}
Let $A,B \subset G$ be closed connected subgroups of $G$ and $\mu$ the Cartan projection in $G.$ The subgroup $A$ acts properly on $G/B$ if and only if
$$\mu (A) \cap ( \mu (B) + C )$$
is bounded for every compact subset $C \subset \mathfrak{a}.$
\label{kb}
\end{theorem}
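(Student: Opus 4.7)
The plan is to translate the properness of the $A$-action on $G/B$ into a statement purely about the Cartan projection $\mu$, exploiting the fact that $\mu$ parameterises the $K$-bi-invariant ``size at infinity'' of elements under the Cartan decomposition $G=KA^{+}K$. First, I use the classical reformulation: an action of $A$ on $G/B$ by left translation is proper if and only if for every compact set $\Omega\subset G$ the intersection $A\cap\Omega B\Omega$ is relatively compact in $G$. Since $\mu$ is continuous and proper, boundedness of a subset $S\subset G$ is equivalent to boundedness of $\mu(S)$, so the question becomes boundedness of $\mu(A\cap\Omega B\Omega)$ inside $\mathfrak{a}^{+}$.

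The technical crux is a perturbation lemma: for every compact $\Omega\subset G$ there exists a compact $C_{\Omega}\subset\mathfrak{a}$ such that
\[
\mu(\omega_{1}g\omega_{2})\in\mu(g)+C_{\Omega}\quad\text{for every }g\in G,\ \omega_{1},\omega_{2}\in\Omega.
\]
I would establish this by substituting a Cartan decomposition $g=k_{1}e^{\mu(g)}k_{2}$ and absorbing $\omega_{1}k_{1}$ and $k_{2}\omega_{2}$ into two fixed compact subsets of $G$; the estimate then reduces to continuity of the map $(h_{1},X,h_{2})\mapsto\mu(h_{1}e^{X}h_{2})-X$ on a compact range of $h_{1},h_{2}$, together with properness of $\mu$. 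A companion observation in the opposite direction is that if $X-\mu(b)\in C$ for a fixed compact $C\subset\mathfrak{a}$, then writing $e^{X}=e^{\mu(b)}e^{X-\mu(b)}$ and using the $KAK$ form of $b$ shows that any $a\in A$ with $\mu(a)=X$ lies in $\Omega_{C}\,b\,\Omega_{C}$ for the compact set $\Omega_{C}=K\exp(C)K$.

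Granted these two halves, both implications become formal. For ($\Rightarrow$), if $A$ acts properly, then $A\cap\Omega_{C}B\Omega_{C}$ is relatively compact, and its $\mu$-image contains $\mu(A)\cap(\mu(B)+C)$ by the converse half of the lemma, forcing the latter to be bounded. For ($\Leftarrow$), the forward half of the lemma gives $\mu(A\cap\Omega B\Omega)\subset\mu(A)\cap(\mu(B)+C_{\Omega})$, which is bounded by hypothesis, so $A\cap\Omega B\Omega$ is relatively compact. The main obstacle is the perturbation lemma itself: it is where the structural content of the Cartan decomposition genuinely enters, and I would handle it by a direct $KAK$ calculation combined with a uniform continuity argument controlling how $\mu$ is affected by multiplication by bounded factors. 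Everything else is then bookkeeping between the geometric condition on $G/B$ and the combinatorial condition on $\mathfrak{a}$.
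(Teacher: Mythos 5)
The paper does not actually prove Theorem \ref{kb}; it imports it from Benoist (Corollary 5.2 of \cite{b}) and Kobayashi (Theorem 1.1 of \cite{kob1}), so the only comparison available is with the proofs in those sources --- and your outline reproduces their architecture faithfully. The reduction of properness to relative compactness of $A\cap\Omega B\Omega$ for every compact $\Omega\subset G$, the use of continuity and properness of $\mu$ to convert boundedness in $G$ into boundedness in $\mathfrak{a}^{+}$ and back, the ``companion observation'' placing any $a$ with $\mu(a)\in\mu(b)+C$ inside $\Omega_C\,b\,\Omega_C$ with $\Omega_C=K\exp(C)K$, and the two formal implications at the end are all correct and are exactly how the cited arguments run.

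The gap is in your treatment of the perturbation lemma, which is the one place where real work happens. You propose to absorb $\omega_1k_1$ and $k_2\omega_2$ into fixed compact sets and then invoke continuity of $(h_1,X,h_2)\mapsto\mu(h_1e^{X}h_2)-X$ ``on a compact range of $h_1,h_2$, together with properness of $\mu$.'' Continuity on a set that is compact only in the $h_1,h_2$ variables yields no bound uniform in $X$, and $X$ ranges over the non-compact cone $\mathfrak{a}^{+}$; properness of $\mu$ does not help either, since the relevant set $\{h_1e^{X}h_2\}$ is unbounded. The inclusion $\mu(\Omega g\Omega)\subset\mu(g)+C_\Omega$ uniformly in $g\in G$ is a genuinely global estimate. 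The standard way to obtain it (Benoist, \cite{b}, Section 5; Kobayashi, \cite{kob1}) is representation-theoretic: one chooses finitely many irreducible representations $(\rho_i,V_i)$ of $G$ whose highest restricted weights $\lambda_i$ span $\mathfrak{a}^{*}$, observes that $\log\|\rho_i(g)\|=\lambda_i(\mu(g))+O(1)$ for a $K$-adapted norm, and uses submultiplicativity of the operator norm applied to $\rho_i(\omega_1)\rho_i(g)\rho_i(\omega_2)$, together with the reverse estimate coming from $g=\omega_1^{-1}(\omega_1g\omega_2)\omega_2^{-1}$, to confine each $\lambda_i(\mu(\omega_1g\omega_2))-\lambda_i(\mu(g))$ to a fixed interval and hence $\mu(\omega_1g\omega_2)-\mu(g)$ to a fixed compact set. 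With that lemma supplied by one of these means, the rest of your argument closes correctly.
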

In this paper we are following an approach of Toshiyuki Kobayashi \cite{ko2}. The latter is based on the following observation.
\begin{proposition}[\cite{ko2}, Lemma 2.3]\label{prop:discrete-connected} Let a real Lie group $G$ act on a locally compact Hausdorff space $X$ and $\Gamma$ be a uniform lattice in $G$. Then
\begin{enumerate}
\item The $G$-action on $X$ is proper if and only if  the $\Gamma$-action on $X$ is proper.
\item $G\setminus X$ is compact if and only if $\Gamma\setminus X$ is compact.
\end{enumerate}
\end{proposition}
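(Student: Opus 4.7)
The plan is to exploit the defining feature of a uniform lattice, namely the existence of a compact set $F\subset G$ with $G=F\Gamma$ (equivalently $G=\Gamma F^{-1}$), and reduce both statements to standard bookkeeping with this compact transversal. I read part (2) as the claim that $G\setminus X$ is compact iff $\Gamma\setminus X$ is compact (the word ``proper'' there must be a typo).

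For part (1), the direction ``$G$-proper $\Rightarrow$ $\Gamma$-proper'' is immediate, since $\Gamma$ is closed in $G$ and properness descends to closed subgroups: for compact $S\subset X$, the set $\{\gamma\in\Gamma:\gamma S\cap S\neq\emptyset\}$ is the intersection of $\Gamma$ with a compact set, hence finite (compact in the discrete topology). For the converse, I would fix a compact $S\subset X$ and consider $E:=\{g\in G:gS\cap S\neq\emptyset\}$, which is closed by continuity of the action and Hausdorffness of $X$. Given $g\in E$, write $g=f\gamma$ with $f\in F$, $\gamma\in\Gamma$; then $f\gamma S\cap S\neq\emptyset$ gives $\gamma S\cap f^{-1}S\neq\emptyset$, and hence $\gamma T\cap T\neq\emptyset$ for the compact set $T:=S\cup F^{-1}S$. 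By properness of the $\Gamma$-action, only finitely many $\gamma_1,\dots,\gamma_k$ satisfy this, so $E\subset F\cdot\{\gamma_1,\dots,\gamma_k\}$. Being a closed subset of a compact set, $E$ is itself compact.

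For part (2), both directions again use the compact transversal. If $\Gamma\setminus X$ is compact, pick compact $S\subset X$ with $\Gamma S=X$; then $GS\supset\Gamma S=X$, so $G\setminus X$ is compact. Conversely, if $G\setminus X$ is compact, pick compact $S'\subset X$ with $GS'=X$ and set $T:=F^{-1}S'$, which is compact. From $G=F\Gamma$ one deduces $G=\Gamma F^{-1}$, hence $\Gamma T=\Gamma F^{-1}S'=GS'=X$, so $\Gamma\setminus X$ is compact.

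The main (and really only) obstacle is choosing the compact thickening $T$ in part (1) correctly so that the hypothesis on $\Gamma$ can be applied; once cocompactness of $\Gamma$ is invoked in the form $G=F\Gamma$ with $F$ compact, the rest is purely topological. No Lie-theoretic input beyond the existence of such an $F$ is required, which is why the result applies to arbitrary locally compact Hausdorff $G$-spaces.
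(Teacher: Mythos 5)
Your argument is correct, including the reading of ``proper'' in part (2) as a typo for ``compact.'' The paper itself gives no proof of this proposition --- it is quoted verbatim from Kobayashi's Lemma 2.3 in \cite{ko2} --- and your compact-transversal argument (a compact $F$ with $G=F\Gamma$, thickening $S$ to $T=S\cup F^{-1}S$) is essentially the standard proof given there, so there is nothing to add.
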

We will use the following Lemma and Theorem  in the proof of the main result. 
\begin{lemma}[\cite{ko1}, Lemma 1.3]
Let $G_{1},G_{2}$ be locally compact groups and $L_{1},H_{1}\subset G_{1},$ $L_{2},H_{2} \subset G_{2}$ be closed subgroups. Assume that $f:G_{1}\rightarrow G_{2}$ is a continuous homomorphism such that $f(L_{1})\subset L_{2},$ $f(H_{1})\subset H_{2},$ $f(L_{1})$ is closed in $G_{2}$ and $L_{1} \cap Kerf$ is compact. Then if the action of $L_{2}$ on $G_{2}/H_{2}$ is proper then the action of $L_{1}$ on $G_{1}/H_{1}$ is proper.
\label{kol}
\end{lemma}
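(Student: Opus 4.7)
The plan is to use the Bourbaki characterization of properness: an action of a locally compact group $L$ on a Hausdorff space $X$ is proper if and only if for every compact $C\subset X$ the return set $\{l\in L:lC\cap C\neq\emptyset\}$ has compact closure in $L$. I would therefore fix a compact $C\subset G_1/H_1$ and aim to prove that $E_1:=\{l\in L_1:lC\cap C\neq\emptyset\}$ is relatively compact in $L_1$.

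The first step is to push the picture through $f$. The inclusion $f(H_1)\subset H_2$ makes the map $\bar f:G_1/H_1\to G_2/H_2$, $gH_1\mapsto f(g)H_2$, well-defined and continuous, so $\bar f(C)$ is compact in $G_2/H_2$. If $l\in E_1$, then $lc_1=c_2$ in $G_1/H_1$ for some $c_1,c_2\in C$, so $f(l)\bar f(c_1)=\bar f(c_2)$ and hence $f(l)\bar f(C)\cap\bar f(C)\neq\emptyset$. Because $f(L_1)\subset L_2$, this puts $f(l)\in E_2:=\{y\in L_2:y\bar f(C)\cap\bar f(C)\neq\emptyset\}$. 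The assumption that $L_2$ acts properly on $G_2/H_2$ forces $E_2$ to be relatively compact, so $f(E_1)$ is contained in a compact subset $K\subset L_2$.

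The second, and only non-formal, step is to deduce relative compactness of $E_1$ upstairs from that of $f(E_1)$ downstairs. For this I would show that $f|_{L_1}:L_1\to L_2$ is a proper map. Its kernel is $L_1\cap\ker f$, compact by hypothesis, and the quotient map $L_1\to L_1/(L_1\cap\ker f)$ by a compact subgroup is proper --- it has compact fibres and is closed, since $K\cdot F$ is closed in a topological group whenever $K$ is a compact subgroup and $F$ is closed. The induced continuous bijective homomorphism $L_1/(L_1\cap\ker f)\to f(L_1)$ is a homomorphism between locally compact groups (here I use that $f(L_1)$ is closed in $G_2$, hence locally compact in the induced topology), and by the open mapping theorem for $\sigma$-compact locally compact groups it is a homeomorphism. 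Composing, $f|_{L_1}$ is proper, so $E_1\subset (f|_{L_1})^{-1}(K)$ is relatively compact in $L_1$, which is exactly what was to be shown.

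The hardest and most delicate point is clearly the last step: both hypotheses --- $L_1\cap\ker f$ compact and $f(L_1)$ closed --- are indispensable there. Without compact kernel the fibres of $f|_{L_1}$ blow up and preimages of compact sets cannot stay compact; without closed image the continuous bijection onto $f(L_1)$ may fail to be a homeomorphism, so one cannot pull compactness back. Everything else reduces to a routine transfer of the Bourbaki criterion from $G_2/H_2$ to $G_1/H_1$ via $\bar f$.
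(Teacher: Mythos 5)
The paper does not actually prove this statement: it is quoted from Lemma 1.3 of \cite{ko1} and used as a black box, so there is no internal proof to compare yours against. Your argument is the standard one and is correct in the setting in which the paper applies the lemma. The transfer of the return set through $\bar f$ is routine, as you say, and you correctly isolate the one genuine issue, namely deducing compactness of $E_1$ from compactness of $f(E_1)$, i.e.\ properness of $f|_{L_1}\colon L_1\to f(L_1)$, which is exactly where both hypotheses (compactness of $L_1\cap\ker f$ and closedness of $f(L_1)$) enter.

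One point should be made explicit rather than left inside the citation of the open mapping theorem: that theorem needs $L_1/(L_1\cap\ker f)$ to be $\sigma$-compact, and this does not follow from local compactness alone. This is not a defect you could engineer away, because the statement as literally written for arbitrary locally compact groups fails. Take $G_1=\mathbb{R}_d\times\mathbb{R}$, where $\mathbb{R}_d$ denotes the additive reals with the discrete topology, $L_1=\mathbb{R}_d\times\{0\}$, $H_1=\{(t,t): t\in\mathbb{R}\}$; take $G_2=\mathbb{R}^2$, $L_2=\mathbb{R}\times\{0\}$, $H_2=\{(t,t): t\in\mathbb{R}\}$; and let $f$ be the identity map on underlying groups. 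All hypotheses hold ($f$ is a continuous homomorphism, $f(L_1)=L_2$ is closed, $\ker f$ is trivial); the quotient $G_2/H_2\cong\mathbb{R}$ carries the usual topology and $L_2$ acts on it properly by translations; but $G_1/H_1\cong\mathbb{R}$ also carries the usual topology while $L_1$ is discrete, so the return sets of the $L_1$-action are uncountable discrete sets and the action is not proper. In the present paper every group involved is a closed subgroup of a connected linear Lie group, hence second countable and $\sigma$-compact, so your proof goes through verbatim; just record the countability hypothesis at the step where you invoke the open mapping theorem.
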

\begin{theorem}[\cite{w}, Theorem 6.2]\label{thm:orbit-decomp}
Let $M$ and $N$ be connected subgroups of a connected, simply connected, solvable Lie group $S$. If $M\backslash S/N$ is compact, and every orbit of $M$ is closed, then $S=MN$.
\label{own}
\end{theorem}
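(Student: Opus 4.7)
The plan is to prove the statement by induction on $\dim S$, with the trivial case $\dim S = 0$ serving as the base. The core technical input I would rely on is the fact that a connected, simply connected solvable Lie group is diffeomorphic to a Euclidean space and that every closed connected subgroup is again simply connected solvable, so that quotients by closed connected normal subgroups behave tamely. First I would dispose of the abelian case: if $S$ is abelian then $S \cong \mathbb{R}^{\dim S}$, the subgroups $M$ and $N$ are linear subspaces, $MN = M+N$ is a closed subspace, and $M \backslash S / N \cong S/(M+N)$ is compact iff $M+N = S$, giving the conclusion immediately.

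For the inductive step on a non-abelian simply connected solvable $S$, I would select a nontrivial closed connected normal subgroup $A \subset S$ (e.g., the last nontrivial term of the derived series, which is abelian). Write $\bar S = S/A$, $\bar M = MA/A$, $\bar N = NA/A$, all of which are again connected closed subgroups with $\bar S$ simply connected solvable of strictly smaller dimension. The natural quotient $q : S \to \bar S$ is closed, so it carries closed sets to closed sets, and the induced map $M \backslash S / N \to \bar M \backslash \bar S / \bar N$ is a continuous surjection; in particular the target is compact. In the favorable cases $A \subset M$ or $A \subset N$ the projection $q$ sends $M$-orbits on $S/N$ bijectively onto $\bar M$-orbits on $\bar S/\bar N$ with closed orbits preserved, so induction gives $\bar S = \bar M \bar N$, and pulling back one obtains $S = MN \cdot A = MN$ (respectively $S = M \cdot NA = MN$).

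The main obstacle is the remaining case, where $A$ is not contained in either $M$ or $N$. Here passing to $\bar S$ replaces $M$ by $MA/A$ and $N$ by $NA/A$; compactness of the double coset space descends without trouble, but closedness of $\bar M$-orbits on $\bar S/\bar N$ is a priori weaker than closedness of $M$-orbits on $S/N$, and one must argue back that $S = (MA)(NA) = MA N$ still forces $S = MN$. My plan to handle this is to use the abelianness of $A$ and the diffeomorphism $S \cong \mathbb{R}^{\dim S}$ to set up an intersection argument: analyze $A \cap MN$, which is a closed subgroup of the abelian group $A$, and use that every $M$-orbit in $S/N$ is closed to show that the $A$-translates of the closed submanifold $MN$ fit together to force $A \subset MN$ (otherwise the compact double coset space would contain an unbounded family of distinct closed $M$-orbits, contradicting compactness). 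Once $A \subset MN$ is established, $S = MA N = MN$ follows from the inductive conclusion on $\bar S$. I expect the subtlety to lie precisely in making this last step rigorous, which is why reducing first to abelian $A$ and invoking Mostow-type structural results for solvmanifolds is the natural route.
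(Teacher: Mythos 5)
This statement is not proved in the paper at all: it is quoted verbatim from Witte Morris (\cite{w}, Theorem 6.2) and used as a black box, so there is no in-paper argument to compare yours with. Judged on its own terms, your proposal has a genuine gap, and it sits exactly where the real content of the theorem lies. Your base case and the two ``favorable'' cases $A\subset M$, $A\subset N$ are fine (and your implicit use of the fact that connected subgroups of a simply connected solvable group are closed and simply connected is legitimate), but the remaining case is not an argument, it is a description of the difficulty. Two specific problems: (i) the claim that $q:S\to \bar S$ ``is closed, so it carries closed sets to closed sets'' is false --- quotient homomorphisms are open, not closed (project $\mathbb{R}^2\to\mathbb{R}$ and look at the image of the hyperbola $xy=1$). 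This is not cosmetic: to apply the inductive hypothesis to $\bar M\backslash\bar S/\bar N$ you must verify that every $\bar M$-orbit on $\bar S/\bar N$ is closed, and the preimage in $S/N$ of the $\bar M$-orbit of $\bar s\bar N$ is $MAsN/N=\bigcup_{a\in A}M(saN)/N$, an infinite union of closed $M$-orbits, which need not be closed. So in the main case the induction cannot even be started without a new idea. (ii) The proposed repair --- ``otherwise the compact double coset space would contain an unbounded family of distinct closed $M$-orbits, contradicting compactness'' --- proves nothing: a compact double coset space may perfectly well contain uncountably many distinct closed orbits (it can be a positive-dimensional manifold), and ``unbounded family'' has no meaning there. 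Moreover $MN$ is in general neither a subgroup nor a closed submanifold, so ``$A\cap MN$ is a closed subgroup of the abelian group $A$'' is unjustified, and the key implication $A\subset MN\Rightarrow S=MN$ rests on $S=MAN$, which you only obtain from the inductive conclusion you have not yet secured.

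In short, the skeleton (induction on $\dim S$, quotient by the last term of the derived series, separate treatment of $A\subset M$ or $A\subset N$) is reasonable, but the case in which $A$ lies in neither subgroup is the whole theorem, and your sketch neither verifies the inductive hypotheses there nor supplies a substitute argument. Witte Morris's actual proof requires substantially more structure theory of simply connected solvable groups (control of products of closed connected subgroups and of how the closed-orbit condition interacts with normal subgroups) than the compactness heuristic you invoke; as written, your proposal should be regarded as incomplete rather than as an alternative proof.
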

\noindent
Also, in the proof of the main result we will use the fact that homogeneous spaces $G/H$ of reductive type and of a maximal real rank cannot admit proper actions of finite discrete subgroups, and, hence, cannot admit compact Clifford-Klein forms. This fact is called the Calabi-Markus phenomenon.
\begin{theorem}[\cite{ko2}, Corollary 4.4]\label{thm:calabi-m} Let $G/H$ be a homogeneous space of reductive type. If $\text{rank}_{\mathbb{R}}G=\text{rank}_{\mathbb{R}}H$, then only finite groups can act properly on $G/H$. In particular, such $G/H$ cannot have compact Clifford-Klein forms.
\end{theorem}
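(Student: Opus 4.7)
The plan is to combine the rank hypothesis with the Cartan projection properness criterion of Theorem \ref{thm:c-proj}. The key point is that when $\mathrm{rank}_{\mathbb{R}} G=\mathrm{rank}_{\mathbb{R}} H$, a suitable choice of the maximal $\mathbb{R}$-split abelian subspace $\mathfrak{a}\subset\mathfrak{p}$ already lies inside $\mathfrak{h}$, which forces the Cartan projection image $\mu(H)$ to be as large as possible, namely all of the closed Weyl chamber $\overline{\mathfrak{a}^+}$. Once that is established, properness of any $\Gamma\subset G$ on $G/H$ immediately pins $\mu(\Gamma)$ to be bounded, and properness of $\mu$ finishes the job.

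First I would exploit the reductive-type assumption: there is a Cartan involution $\theta$ of $\mathfrak{g}$ with $\theta(\mathfrak{h})=\mathfrak{h}$, yielding the compatible Cartan decompositions $\mathfrak{g}=\mathfrak{k}\oplus\mathfrak{p}$ and $\mathfrak{h}=\mathfrak{k}_h\oplus\mathfrak{p}_h$ with $\mathfrak{p}_h=\mathfrak{p}\cap\mathfrak{h}$ (as recalled in the Preliminaries). Choose a maximal abelian $\mathfrak{a}_h\subset\mathfrak{p}_h$, so $\dim\mathfrak{a}_h=\mathrm{rank}_{\mathbb{R}} H$, and extend it to a maximal abelian $\mathfrak{a}\subset\mathfrak{p}$. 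By the rank hypothesis $\dim\mathfrak{a}=\dim\mathfrak{a}_h$, hence $\mathfrak{a}=\mathfrak{a}_h\subset\mathfrak{h}$, and exponentiating gives $A\subset H$.

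With this choice, I would next show that $\mu(H)=\overline{\mathfrak{a}^+}$. Since $A\subset H$ we have $\mu(A)\subset\mu(H)$. For any $a\in A$ there is a Weyl group element $w$ with $w(a)\in\overline{\mathfrak{a}^+}$; representing $w$ by some $k\in N_K(\mathfrak{a})$ via Proposition \ref{prop:weyl} gives $a=k^{-1}\cdot w(a)\cdot k\in K\cdot A^+\cdot K$, so $\mu(a)=w(a)$. Therefore $\mu(A)=\overline{\mathfrak{a}^+}$, and since $\mu(G)\subseteq\overline{\mathfrak{a}^+}$ tautologically, we conclude $\mu(H)=\overline{\mathfrak{a}^+}$.

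Finally, let $\Gamma\subset G$ act properly on $G/H$. Applying the Cartan projection criterion (Theorem \ref{thm:c-proj}, which via \cite{kob1} also covers non-connected and discrete subgroups), properness is equivalent to $\mu(\Gamma)\cap(\mu(H)+C)$ being bounded for every compact $C\subset\mathfrak{a}$. Taking $C=\{0\}$ and using $\mu(\Gamma)\subset\overline{\mathfrak{a}^+}=\mu(H)$, we obtain that $\mu(\Gamma)$ itself is bounded; since $\mu$ is proper, $\Gamma$ is contained in a compact subset of $G$, so a discrete $\Gamma$ must be finite. The non-existence of compact Clifford--Klein forms is then immediate, since the fundamental group of such a form would be an infinite discrete subgroup of $G$. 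The main obstacle I anticipate is purely technical: the form of the Cartan projection criterion recorded in Theorem \ref{thm:c-proj} is stated for closed connected subgroups, so one has to quote the discrete version from \cite{kob1} rather than Theorem \ref{thm:c-proj} itself; this is a well-known reformulation, with ``bounded'' for the continuous version corresponding to ``finite'' in the discrete one.
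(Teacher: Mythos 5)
The paper does not actually prove this statement --- it is imported verbatim from Kobayashi \cite{ko2} (Corollary 4.4) and used as a black box --- so there is no in-paper argument to compare against. Your proof is correct and is essentially the standard modern derivation of the Calabi--Markus phenomenon. The three steps are all sound: (i) a maximal abelian subspace $\mathfrak{a}_h\subset\mathfrak{p}_h$ sits inside some maximal abelian $\mathfrak{a}\subset\mathfrak{p}$, so the rank equality forces $\mathfrak{a}_h=\mathfrak{a}$ and hence $A=\exp(\mathfrak{a})\subset H$; (ii) conjugating by a representative $k\in N_K(\mathfrak{a})$ of the Weyl element moving $Y\in\mathfrak{a}$ into $\overline{\mathfrak{a}^+}$ shows $\mu(A)=\overline{\mathfrak{a}^+}$, whence $\mu(H)=\overline{\mathfrak{a}^+}$; (iii) the properness criterion with $C=\{0\}$ then bounds $\mu(\Gamma)$, and properness of $\mu$ plus discreteness gives finiteness. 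The one point that genuinely needs care is the one you flag yourself: Theorem \ref{thm:c-proj} as recorded in the paper is stated for closed connected subgroups, so it cannot be quoted literally for a discrete $\Gamma$; you must invoke the subset version (Kobayashi \cite{kob1}, Theorem 1.1, or Benoist \cite{b}, which are stated for arbitrary subsets, with properness of the discrete action equivalent to the relative-compactness condition on $\mu(\Gamma)\cap(\mu(H)+C)$). With that citation made precise, and with the tacit understanding that ``only finite groups act properly'' refers to discrete groups (a compact infinite subgroup always acts properly) and that $G/H$ is noncompact for the final clause, the argument is complete.
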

In the proof of Theorem \ref{thm:Gamma} we will need the Jacobson-Morozov theorem (see \cite{cm}, Theorem 9.2.1). We say that a triple $(H,X,Y)$ of vectors in $\mathfrak{g}$ is an $\mathfrak{s}\mathfrak{l}_2$-{\it triple}, if
$$[H,X]=2X,\,[H,Y]=-2Y,\,[X,Y]=H.$$
\begin{theorem}[Jacobson-Morozov]\label{thm:j-m} Let $\mathfrak{g}$ be a real semisimple Lie algebra and $X$ be a non-zero nilpotent element. Then there exists an $\mathfrak{s}\mathfrak{l}_2$-triple $(H,X,Y)$ in $\mathfrak{g}$.
\end{theorem}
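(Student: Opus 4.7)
The plan is to follow the classical two-step strategy: first (Jacobson) produce an element $H \in \mathfrak{g}$ with $[H,X]=2X$ together with an auxiliary $Y_0$ satisfying $[X,Y_0]=H$, and then (Morozov) correct $Y_0$ by an element of the centralizer $\mathfrak{z}(X)$ to obtain $Y$ with $[H,Y]=-2Y$.

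For Step 1, I would look for $Y_0 \in \mathfrak{g}$ such that $\mathrm{ad}(X)^2 Y_0 = -2X$, and then set $H := [X, Y_0]$. A direct application of the Jacobi identity gives $[H,X] = -\mathrm{ad}(X)^2 Y_0 = 2X$, so the content of Step 1 is reduced to the assertion that $X$ lies in the image of $\mathrm{ad}(X)^2 : \mathfrak{g} \to \mathfrak{g}$. Semisimplicity of $\mathfrak{g}$ makes the Killing form $B$ non-degenerate, and the invariance identity $B([X,A],C) = -B(A,[X,C])$ shows that $\mathrm{ad}(X)$ is $B$-skew; hence $\mathrm{Im}(\mathrm{ad}(X)^2)^{\perp} = \ker(\mathrm{ad}(X)^2)$ and the claim reduces to verifying $B(X,Z) = 0$ for every $Z$ with $[X,[X,Z]] = 0$. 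For such $Z$, the element $W := [X,Z]$ commutes with $X$, so $\mathrm{ad}(X)$ and $\mathrm{ad}(W) = [\mathrm{ad}(X),\mathrm{ad}(Z)]$ commute in $\mathrm{End}(\mathfrak{g})$; a standard application of the quantum Leibniz rule then shows that $\mathrm{ad}(X)\,\mathrm{ad}(Z)$ is a nilpotent endomorphism of $\mathfrak{g}$, whence $B(X,Z) = \mathrm{tr}(\mathrm{ad}(X)\,\mathrm{ad}(Z)) = 0$.

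For Step 2, given $H$ and any $Y_0$ with $[X,Y_0]=H$, set $U := [H,Y_0] + 2Y_0$. The Jacobi identity together with $[H,X]=2X$ yields $[X,U] = -2H + 2H = 0$, so $U \in \mathfrak{z}(X)$. Because $[\mathrm{ad}(H),\mathrm{ad}(X)] = 2\,\mathrm{ad}(X)$, the operator $\mathrm{ad}(H)$ preserves $\mathfrak{z}(X)$, and a representation-theoretic analysis of the generalized eigenspaces of $\mathrm{ad}(H)|_{\mathfrak{z}(X)}$ (using the nilpotence of $\mathrm{ad}(X)$ together with the partial commutation relations) produces $V \in \mathfrak{z}(X)$ satisfying $[H,V]+2V = -U$. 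Then $Y := Y_0 + V$ satisfies simultaneously $[X,Y]=H$ and $[H,Y]=-2Y$, completing the $\mathfrak{s}\mathfrak{l}_2$-triple.

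The main obstacle is the trace-zero argument inside Step 1: establishing that $\mathrm{ad}(X)\,\mathrm{ad}(Z)$ is nilpotent whenever $[X,[X,Z]] = 0$. This is exactly the place where semisimplicity of $\mathfrak{g}$ enters decisively, through the non-degeneracy of the Killing form and the identification of $B$ with a trace, and without it the statement is known to fail (no such triple can exist when $\mathfrak{g}$ is, say, a non-trivial nilpotent Lie algebra containing a nonzero nilpotent element).
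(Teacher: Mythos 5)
The paper does not prove this statement at all: it is imported as a black box from Collingwood--McGovern \cite{cm} (Theorem 9.2.1), so there is no in-text argument to compare yours against. Your outline is the classical Jacobson--Morozov proof and its architecture is sound: the reduction of Step 1 to $X\in\mathrm{Im}(\mathrm{ad}(X)^2)$, the identification $\mathrm{Im}(\mathrm{ad}(X)^2)^{\perp}=\ker(\mathrm{ad}(X)^2)$ via self-adjointness of $\mathrm{ad}(X)^2$ for the Killing form, and the Morozov correction $Y=Y_0+V$ with $(\mathrm{ad}(H)+2)V=-U$, $V\in\mathfrak{z}(X)$, are all correct, and everything works over $\mathbb{R}$ since only characteristic zero is used.

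Two assertions carry essentially all of the content and are deferred rather than proved. First, the nilpotence of $\mathrm{ad}(X)\,\mathrm{ad}(Z)$ for $Z\in\ker(\mathrm{ad}(X)^2)$: this is true, but commutativity of $\mathrm{ad}(X)$ with $c:=[\mathrm{ad}(X),\mathrm{ad}(Z)]$ does not by itself give it. One first needs Jacobson's lemma that $c$ is nilpotent (from $c^k=[\mathrm{ad}(X),\mathrm{ad}(Z)c^{k-1}]$, so $\mathrm{tr}(c^k)=0$ for all $k$), and then an argument with the filtration $F_i=\ker(\mathrm{ad}(X)^i)$, which $\mathrm{ad}(X)\mathrm{ad}(Z)$ preserves and on whose associated graded pieces it is intertwined with integer multiples of $c$; only then does nilpotence (or even just $\mathrm{tr}(\mathrm{ad}(X)\mathrm{ad}(Z))=0$, which is all you need) follow. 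Second, in Step 2 you must show that $-2$ is not an eigenvalue of $\mathrm{ad}(H)|_{\mathfrak{z}(X)}$, i.e.\ that $\mathrm{ad}(H)+2$ is invertible on $\mathfrak{z}(X)$; this is Morozov's lemma proper, and it genuinely uses $H\in\mathrm{Im}(\mathrm{ad}(X))$ (automatic in your setup, but worth saying), not merely $[H,X]=2X$. You flag both places yourself, so I would describe the proposal as a correct skeleton of the standard proof with the two key lemmas left unproved; as a self-contained argument it is not yet complete, but no step of it is wrong.
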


Since we consider real Lie algebras, we use the following definition of a {\it parabolic subgroup} $Q\subset G$: it is parabolic, if its Lie subalgebra $\mathfrak{q}$ is parabolic. We say that $\mathfrak{q}\subset\mathfrak{g}$ is parabolic, if the complexification $\mathfrak{q}^c$ is a parabolic Lie subalgebra in the complexification  $\mathfrak{g}^c$ of $\mathfrak{g}$. Note that although we define $\mathfrak{q}$ in terms of Lie algebras over $\mathbb{C}$, it admits a complete description in terms of the real root system of $\mathfrak{g}$. We will present the details in Section \ref{sec:proof1}. Also, we refer to \cite{ov} (Chapter 6, Section 1.5) and \cite{o}.

\section{Proof of Theorem \ref{thm:Gamma}}\label{sec:proof1}

\subsection{Zariski closures and syndetic hulls}
An important notion which we use in the proof of Theorem \ref{thm:Gamma} is the notion of a syndetic hull \cite{w-a}.
\begin{definition}
A {\it syndetic hull} of a subgroup $\Gamma$ of a Lie group $G$ is a subgroup $B$ of $G$ such that $B$ is connected, $B$ contains $\Gamma$ and $\Gamma\setminus B/$ is compact.
\end{definition}
One of the important tools used in the proof of Theorem \ref{thm:Gamma} is the following.
\begin{lemma}\label{lemma:syndetic} Assume that $G/H$ is a homogeneous space of  reductive type. If a discrete solvable subgroup $\Gamma\subset G$ acts properly and co-compactly on $G/H$, then it admits a syndetic hull $B$, which is a connected solvable Lie subgroup admitting a uniform  lattice $\Gamma$. The syndetic hull $B$ acts properly and co-compactly on $G/H$.
\end{lemma}
\begin{proof} The proof of Lemma \ref{lemma:syndetic} follows from the theorem.
\begin{theorem}[\cite{mfj}, Section 1.6]\label{thm:syndetic} Let $V$ be a finite-dimensional real vector space and $\Lambda$ a virtually solvable subgroup of $GL(V)$. Then there exists at least one closed virtually solvable subgroup $S\subset GL(V)$ containing $\Lambda$ such that:
\begin{enumerate}
\item $S$ has finitely many components and each component meets $\Lambda$;
\item (syndeticity) there exists a compact set $K\subset H$ such that $S=K\cdot \Lambda$;
\item $S$ and $\Lambda$ have the same Zariski closure in $GL(V)$.
%\item $\dim H\leq \text{rank}\,G$.
\end{enumerate}
\end{theorem}
\noindent We continue the proof of Lemma \ref{lemma:syndetic} as follows.
Assume  that we are given a homogeneous space of a reductive type $G/H$, and that $G$ is connected and linear, thus, $G\subset GL(V)$. Assume that $\Gamma$ is a solvable discrete subgroup of $G$ acting properly and co-compactly on $G/H$. Consider the Zariski closure $L=\bar\Gamma$. Apply Theorem \ref{thm:syndetic} to $\Gamma$ (instead of $\Lambda$). We obtain that there exists a subgroup $B\subset GL(V)$ such that $B\supset\Gamma$ and $\bar{\Gamma}=\bar B=L$ (that is, we have $B$ instead of $S$ in Theorem \ref{thm:syndetic}). Since $L$ is the Zariski closure of $\Gamma$, it is also solvable. Therefore, since $\bar B=L$, we obtain a (virtually) solvable subgroup $B$ such that $\Gamma\setminus B$ is compact. Consider the connected component $B_0$. Note that $B$ has only finite number of connected components, hence $B_0$ is a connected Lie subgroup in $G$. Clearly, $B_0$ must be solvable. Since the Lie subgroup $B$ contains a uniform lattice $\Gamma$, so does $B_0$. 
Note that the last claim of the lemma follows from Proposition \ref{prop:discrete-connected}.
\end{proof}

\subsection{Factorizations of Lie groups and Lie algebras}
Following Onishchik \cite{o-book} we introduce the notion of the {\it factorizations of Lie groups and Lie algebras}. 
\begin{definition}\label{def:factor} {\rm We say that a triple $(G,H,L)$ of Lie groups is a factorization, if $H$ and $L$ are Lie subgroups of $G$ and $G=H\cdot L$. In the same way, a triple of Lie algebras $(\mathfrak{g},\mathfrak{h},\mathfrak{l})$ is called a factorization, if $\mathfrak{h}$ and $\mathfrak{l}$ are Lie subalgebras of $\mathfrak{g}$, and $\mathfrak{g}=\mathfrak{h}+\mathfrak{l}$.}
\end{definition}

\noindent
We will need the following straightforward result.
\begin{proposition}[\cite{o-book}, Corollary on p. 88]\label{prop:factorization} If a triple of Lie groups is a factorization, so is the triple of their Lie algebras.
\end{proposition}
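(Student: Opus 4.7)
The plan is to deduce $\mathfrak{g} = \mathfrak{h} + \mathfrak{l}$ from $G = H \cdot L$ by analyzing the multiplication map
$$\mu \colon H \times L \to G, \qquad \mu(h, l) = h l,$$
which is surjective by the factorization hypothesis. Since the differential at the identity satisfies $d\mu_{(e,e)}(X, Y) = X + Y$, its image is precisely $\mathfrak{h} + \mathfrak{l}$; it therefore suffices to show that $d\mu_{(e,e)}$ is surjective onto $\mathfrak{g}$.

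The first step is to check that the rank of $d\mu$ is constant on $H \times L$. The identity $\mu(h_0 h, l l_0) = h_0 \mu(h, l) l_0$ means that $\mu$ intertwines the diffeomorphism $\Phi_{(h_0, l_0)} \colon (h, l) \mapsto (h_0 h, l l_0)$ of $H \times L$ with the diffeomorphism $\Psi_{(h_0, l_0)} \colon g \mapsto h_0 g l_0$ of $G$. Differentiating this relation at $(e, e)$ and using that the differentials of $\Phi_{(h_0, l_0)}$ and $\Psi_{(h_0, l_0)}$ are linear isomorphisms, the rank of $d\mu$ at $(h_0, l_0) = \Phi_{(h_0, l_0)}(e, e)$ equals the rank at $(e, e)$, for every $(h_0, l_0) \in H \times L$.

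The second step is to invoke Sard's theorem. Since $\mu$ is a smooth map between second-countable manifolds, its set of critical values has measure zero in $G$; because $\mu$ is surjective, there must therefore exist a regular point $(h_0, l_0) \in H \times L$, i.e.\ a point where $d\mu$ is surjective onto $\mathfrak{g}$. Combined with the constancy of rank established in the previous step, this forces $d\mu_{(e,e)}$ itself to be surjective, whence $\mathfrak{h} + \mathfrak{l} = \mathfrak{g}$, as required. I do not anticipate any substantive obstacle in this argument: it is a standard equivariance-plus-Sard routine, and the only point deserving care is to confirm that the hypotheses of Sard's theorem are in force, which is automatic once $H$ and $L$ are genuine Lie subgroups in Onishchik's sense (so that $H \times L$ is a finite-dimensional second-countable manifold).
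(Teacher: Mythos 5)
Your argument is correct. Note that the paper itself gives no proof of this proposition --- it is quoted verbatim from Onishchik's book (Corollary on p.~88 of \cite{o-book}) --- so there is nothing to compare line by line; what you have produced is a valid self-contained justification of the cited fact. The two ingredients are sound: the identity $\mu(h_0h,ll_0)=h_0\,\mu(h,l)\,l_0$ does show that $d\mu$ has constant rank, since left and right translations are diffeomorphisms; and Sard's theorem then forces that constant rank to equal $\dim G$, because otherwise every point of $H\times L$ would be critical and the image $\mu(H\times L)=G$ would be a set of measure zero in $G$. Combined with $d\mu_{(e,e)}(X,Y)=X+Y$, whose image is $\mathfrak{h}+\mathfrak{l}$, this gives $\mathfrak{g}=\mathfrak{h}+\mathfrak{l}$. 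The only hypotheses you need to keep visible are that $H$ and $L$ are genuine (immersed) Lie subgroups with at most countably many components, so that $H\times L$ is second countable and Sard applies; this holds in the setting of the paper. For comparison, the textbook route is marginally shorter: $G=HL$ says that $H$ acts transitively on $G/L$, a transitive smooth action of a second-countable Lie group is a submersion at every point (itself a Sard/Baire argument), and surjectivity of $\mathfrak{h}\to T_{eL}(G/L)\cong\mathfrak{g}/\mathfrak{l}$ is exactly $\mathfrak{h}+\mathfrak{l}=\mathfrak{g}$. The two proofs are essentially the same mechanism packaged differently, so either is acceptable.
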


\subsection{Inclusion $ {B}\subset {TUC_N(T)} $}

Use the notation from the previous subsection. In this subsection we don't use the fact that $B$ is a syndetic hull of $\Gamma$. Instead of that, we consider the following. Assume that we are given a reductive homogeneous space $G/H$ and a closed connected solvable subgroup $B\subset AN$ acting properly and co-compactly on $G/H$. 
Let $T=A\cap (BN), U=B\cap N$ and denote by $C_N(T)$ the centralizer of $T$ in $N$. 
Consider the connected component of the Zariski closure of $B$ and denote it by $\bar B=L$. Denote the Lie algebras of $B, \bar B=L,\bar U, U,\bar T, T, C_N(\bar T), C_N(T)$ by
$$\mathfrak{b},\mathfrak{l}, \bar{\mathfrak{u}}, \mathfrak{u},\bar{\mathfrak{t}}, \mathfrak{t}, \mathfrak{c},\mathfrak{c}_t,$$
respectively.
Following \cite{ow2}, we say that $B$ is {\it compatible with $A$}, if $B\subset TUC_N(T)$.
\begin{lemma}[\cite{ow2}, Lemma 2.3]\label{lemma:oh-witte} If $B$ is a closed connected subgroup of $AN$, then it is conjugate, via an element of $N$, to a subgroup which is compatible with $A$. 
\end{lemma}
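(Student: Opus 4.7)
The plan is to construct the conjugating element $n_0 \in N$ explicitly by exploiting the semisimplicity of $\operatorname{ad}\mathfrak{t}$ on $\mathfrak{n}$ and iterating along the lower central series of $N$.

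\textbf{Setup.} The projection $\pi : AN \to A$ with kernel $N$ sends $B$ onto $T = \pi(B) = A \cap BN$ and has kernel $U = B \cap N$ on $B$, giving the exact sequence $1 \to U \to B \to T \to 1$. Since $\mathfrak{t} \subset \mathfrak{a}$ consists of simultaneously diagonalisable elements, $\operatorname{ad}\mathfrak{t}$ acts semisimply on $\mathfrak{n}$, producing the decomposition
\[
\mathfrak{n} = \mathfrak{c}_t \oplus \mathfrak{n}', \qquad \mathfrak{n}' = \sum_{\alpha \in \Delta^+,\, \alpha|_{\mathfrak{t}} \neq 0}\mathfrak{g}_{\alpha}.
\]
For every regular $X_0 \in \mathfrak{t}$ (outside the root hyperplanes), $\operatorname{ad}(X_0)|_{\mathfrak{n}'}$ is a linear automorphism. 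The Lie algebra of $TUC_N(T)$ is $\mathfrak{t}+\mathfrak{u}+\mathfrak{c}_t$, so $B$ becomes compatible with $A$ once $\operatorname{Ad}(n_0)\mathfrak{b} \subset \mathfrak{t}+\operatorname{Ad}(n_0)\mathfrak{u}+\mathfrak{c}_t$.

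\textbf{Construction of $n_0$.} Fix a vector space complement $\mathfrak{s}$ of $\mathfrak{u}$ in $\mathfrak{b}$. Each $X \in \mathfrak{t}$ then has a unique lift $\tilde X = X + Y(X) \in \mathfrak{s}$ with $Y(X) \in \mathfrak{n}$ depending linearly on $X$, which I split as $Y(X) = Y^c(X) + Y'(X)$ along $\mathfrak{c}_t \oplus \mathfrak{n}'$. Fixing a regular $X_0 \in \mathfrak{t}$, set $Z_0 = -\operatorname{ad}(X_0)^{-1} Y'(X_0) \in \mathfrak{n}'$ and observe that in the BCH expansion
\[
\operatorname{Ad}(\exp Z_0)\tilde X_0 = \tilde X_0 + [Z_0, \tilde X_0] + \tfrac{1}{2}[Z_0,[Z_0,\tilde X_0]] + \cdots
\]
the $\mathfrak{n}'$-part of the first-order term cancels $Y'(X_0)$, while all remaining errors fall into $[\mathfrak{n},\mathfrak{n}]$. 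One then iterates along the lower central series $\mathfrak{n} \supset [\mathfrak{n},\mathfrak{n}] \supset [\mathfrak{n},[\mathfrak{n},\mathfrak{n}]] \supset \cdots$: at step $i$ the same idea, applied to the residual $\mathfrak{n}'$-error that now lives in $\mathfrak{n}^{(i)}$, produces some $Z_i \in \mathfrak{n}' \cap \mathfrak{n}^{(i)}$. Since $N$ is nilpotent, the process terminates after finitely many steps and yields $n_0 = \exp Z_k \cdots \exp Z_0 \in N$.

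\textbf{Main obstacle.} The delicate point is that the single $n_0$ built from the equation at $X_0$ must straighten every lift $\tilde X$ simultaneously, not just $\tilde X_0$. Here I use the abelianity of $\mathfrak{t}$: since $[\tilde X, \tilde X_0] \in \mathfrak{u}$ for every $X \in \mathfrak{t}$, a bracket computation shows that $Y'(X)$ is tied to $Y'(X_0)$ through the $\operatorname{ad}\mathfrak{t}$-weight decomposition of $\mathfrak{n}'$, so that the correction $Z_0$ built from $X_0$ already eliminates the $\mathfrak{n}'$-component of every lift modulo $[\mathfrak{n},\mathfrak{n}]$. The same coherence propagates through the iterative steps of the filtration. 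Once this is verified, one obtains $\operatorname{Ad}(n_0)\mathfrak{b} \subset \mathfrak{t} + \operatorname{Ad}(n_0)\mathfrak{u} + \mathfrak{c}_t$, which integrates to $n_0 B n_0^{-1} \subset TUC_N(T)$. The simultaneous coherence across all of $\mathfrak{t}$, rather than the existence of the individual $Z_i$, is where the main technical effort is concentrated; everything else is a mechanical consequence of the semisimplicity of $\operatorname{ad}\mathfrak{t}$ and the nilpotence of $N$.
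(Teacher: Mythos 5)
There is a genuine gap, and it sits exactly where you placed the ``main obstacle''. Your plan is a direct straightening argument: pick a complement $\mathfrak{s}$ of $\mathfrak{u}$ in $\mathfrak{b}$ and kill the $\mathfrak{n}'$-part of the lifts order by order along the lower central series, using the invertibility of $\operatorname{ad}(X_0)|_{\mathfrak{n}'}$. The individual corrections $Z_i$ do exist, but the assertion that the single correction built from one regular $X_0$ simultaneously handles every $X\in\mathfrak{t}$ is only asserted, and the bracket identity you invoke does not deliver it. Writing $Y(X)=\sum_\alpha Y_\alpha(X)$ in root components, the relation $[\tilde X,\tilde X_0]\in\mathfrak{u}$ gives $\sum_\alpha\bigl(\alpha(X)Y_\alpha(X_0)-\alpha(X_0)Y_\alpha(X)\bigr)\in\mathfrak{u}+[\mathfrak{n},\mathfrak{n}]$; to conclude that $Y'(X)-\operatorname{ad}(X)Z_0$ lies in $\mathfrak{u}+\mathfrak{c}_t+[\mathfrak{n},\mathfrak{n}]$ you would have to divide the individual root components of this element of $\mathfrak{u}+[\mathfrak{n},\mathfrak{n}]$ by the various scalars $\alpha(X_0)$, and nothing guarantees the result stays in $\mathfrak{u}+\mathfrak{c}_t+[\mathfrak{n},\mathfrak{n}]$: at this stage $\mathfrak{u}=\mathfrak{b}\cap\mathfrak{n}$ is not known to be a sum of root spaces, nor even $\operatorname{ad}\,\mathfrak{t}$-invariant --- the inclusion $[\mathfrak{t},\mathfrak{u}]\subset\mathfrak{u}$ is one of the \emph{conclusions} of Lemma \ref{lemma:alg-inclusion}, so using any form of it here is circular. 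The same defect is visible in the dependence of your $Z_0$ on the arbitrary complement $\mathfrak{s}$: changing $\mathfrak{s}$ changes $Y'(X_0)$ by the $\mathfrak{n}'$-component of an element of $\mathfrak{u}$ and hence changes $Z_0$ in an uncontrolled way, so the construction cannot work for every choice and you give no criterion for a good one.

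The ingredient that fills this hole in the paper's proof is structural rather than computational: one passes to the identity component of the Zariski closure, $\bar B=\bar T\ltimes\bar U$ (Borel's theorem for connected solvable algebraic groups, \cite{bor}), conjugates $\bar T$ into $A$ by an element of $N$, and then invokes the nontrivial fact from \cite{w2} that $[\bar B,\bar B]\subset B\cap N=U$. Complete reducibility of the $\bar T$-action on $\bar{\mathfrak{u}}$ then yields a complement $\mathfrak{c}$ centralized by $\bar T$ with $\bar{\mathfrak{u}}=\mathfrak{u}+\mathfrak{c}$, whence $\bar B=\bar T\,UC_N(\bar T)$ and, after projecting to $A$, $B\subset TUC_N(T)$. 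It is precisely $[\bar B,\bar B]\subset U$ that supplies the ``coherence across all of $\mathfrak{t}$'' your argument is missing; without importing it (or reproving it), the BCH iteration cannot be closed. Two smaller points: your $Z_0$ has the wrong sign (you need $\operatorname{ad}(X_0)Z_0=Y'(X_0)$ for the first-order cancellation), and the final passage from $\operatorname{Ad}(n_0)\mathfrak{b}\subset\mathfrak{t}+\operatorname{Ad}(n_0)\mathfrak{u}+\mathfrak{c}_t$ to the set-theoretic inclusion $n_0Bn_0^{-1}\subset TUC_N(T)$ needs an argument, since $TUC_N(T)$ is a product of subsets and not an a priori subgroup.
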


\noindent
Thus, in our considerations we will always assume that
$$B\subset TUC_N(T).$$
Lemma \ref{lemma:oh-witte} implies also the following.
\begin{lemma}\label{lemma:alg-inclusion} The following holds:
 $$\mathfrak{u}+\mathfrak{c}=\bar{\mathfrak{u}},\,\mathfrak{t}\subset\bar{\mathfrak{t}},\,[\mathfrak{t},\mathfrak{u}]\subset \mathfrak{u},\,\bar{\mathfrak{t}}\subset\mathfrak{a},\,\bar{\mathfrak{u}}\subset\mathfrak{n}.$$
\end{lemma}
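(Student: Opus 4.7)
Part (b), $\mathfrak{t}\subset\bar{\mathfrak{t}}$, is immediate, since $T\subset\bar T$ by definition of the Zariski closure and this inclusion passes to Lie algebras.

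For (c), $[\mathfrak{t},\mathfrak{u}]\subset\mathfrak{u}$, the plan is to exploit the Iwasawa-type projection $p\colon B\to T$ obtained by restricting $AN\to A$. One first checks that $p$ is surjective with kernel $U=B\cap N$: if $t=bn\in A\cap BN$ then $b=tn^{-1}\in AN$ has hyperbolic component $t$, so $t\in p(B)$, and the reverse inclusion is clear. Hence at the Lie algebra level $\mathfrak{b}\to\mathfrak{t}$ is surjective with kernel $\mathfrak{u}$. Combining this with the compatibility $\mathfrak{b}\subset\mathfrak{t}+\mathfrak{u}+\mathfrak{c}_t$, I would build a linear section $\sigma\colon\mathfrak{t}\to\mathfrak{b}$ of the form $\sigma(X)=X+Z(X)$ with $Z(X)\in\mathfrak{c}_t$: any lift $\sigma_0(X)=X+V(X)+Z(X)$ with $V(X)\in\mathfrak{u}$ can be corrected to $\sigma$ by subtracting $V(X)\in\mathfrak{u}\subset\mathfrak{b}$. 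For $Y\in\mathfrak{u}$ the bracket $[\sigma(X),Y]=[X,Y]+[Z(X),Y]$ then lies in $\mathfrak{b}\cap\mathfrak{n}=\mathfrak{u}$. Because $\mathfrak{t}\subset\mathfrak{a}$ acts semisimply on $\mathfrak{n}$ with restricted-root weight decomposition $\mathfrak{n}=\bigoplus_\lambda\mathfrak{n}_\lambda$ and $\mathfrak{c}_t=\mathfrak{n}_0$, the operator $\operatorname{ad}(Z(X))$ preserves each $\mathfrak{n}_\lambda$, while $\operatorname{ad}(X)$ acts on $\mathfrak{n}_\lambda$ by the scalar $\lambda(X)$. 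Projecting the relation $[X,Y]+[Z(X),Y]\in\mathfrak{u}$ onto individual weight spaces and varying $X\in\mathfrak{t}$ over a regular element (so that the distinct weights $\lambda$ are separated by $\lambda(X)$) should force $\mathfrak{u}$ to split as $\bigoplus_\lambda(\mathfrak{u}\cap\mathfrak{n}_\lambda)$, hence $\operatorname{ad}(\mathfrak{t})\mathfrak{u}\subset\mathfrak{u}$.

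For (a), $\mathfrak{u}+\mathfrak{c}=\bar{\mathfrak{u}}$, the plan is to identify $\bar U$ with the unipotent radical $L_u$ of the connected solvable algebraic group $L=\bar B$. Since $U\subset N$ consists of unipotent elements, $\bar U\subset L_u$. Conversely, every unipotent element of $B\subset AN$ has trivial hyperbolic part, hence lies in $B\cap N=U$, so $B\cap L_u=U$; Zariski-density of $B$ in $L$ then transfers to Zariski-density of $U$ in $L_u$, giving $\bar U=L_u$. The Levi decomposition $L=\bar T\ltimes L_u$ makes $\bar T$ act by conjugation on $L_u$ with weight decomposition $\mathfrak{l}_u=\bigoplus_\mu\mathfrak{l}_{u,\mu}$; the zero-weight piece is $L_u\cap C_N(\bar T)$, whose Lie algebra I expect to be exactly $\mathfrak{c}$, while the nonzero-weight piece must be contained in $\mathfrak{u}$ by the very same section argument used in (c) (the nonzero-weight components of $\mathfrak{b}$ coming from $\mathfrak{u}$). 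Assembling these contributions yields $\bar{\mathfrak{u}}=\mathfrak{l}_u=\mathfrak{u}+\mathfrak{c}$.

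The main obstacle I anticipate is verifying that the zero $\bar T$-weight piece of $L_u$ coincides precisely with $\mathfrak{c}$ (i.e., $C_N(\bar T)\subset L$, not merely $C_N(\bar T)\subset N$), which requires the full force of the compatibility $B\subset TUC_N(T)$ together with the Zariski closure operation genuinely capturing all $C_N(\bar T)$-contributions that appear in the factorizations of elements of $B$. Once this identification is in place, parts (a) and (c) are handled uniformly by the weight-space analysis in $\mathfrak{n}$.
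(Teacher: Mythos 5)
Parts (b) and (c) of your argument are essentially sound. For (b) you should be slightly more careful: in the paper's notation $\bar T$ is the torus factor in the semidirect decomposition $\bar B=\bar T\ltimes\bar U$ of the Zariski closure, not literally the Zariski closure of $T$, so the inclusion $T\subset\bar T$ comes from $T=\pi(B)\subset\pi(\bar B)=\bar T$ rather than ``by definition''. Your argument for (c) is correct and arguably cleaner than the paper's: since $Z(X)\in\mathfrak{c}_t$ commutes with $X$, the operator $\mathrm{ad}(X)+\mathrm{ad}(Z(X))$, which preserves $\mathfrak{u}$, has Jordan decomposition with semisimple part $\mathrm{ad}(X)$ and nilpotent part $\mathrm{ad}(Z(X))$, so $\mathrm{ad}(X)$ preserves $\mathfrak{u}$ as well; your regular-element, weight-space version of this is a workable equivalent. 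Note, however, that it presupposes the compatibility $\mathfrak{b}\subset\mathfrak{t}+\mathfrak{u}+\mathfrak{c}_t$, which the paper in fact proves in the same argument as the present lemma, so the logical order matters.

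The genuine gap is in (a). First, the claim that Zariski-density of $B$ in $L$ ``transfers'' to Zariski-density of $U=B\cap L_u$ in $L_u$ is false: a connected Lie subgroup of a unipotent group is automatically Zariski closed (it is $\exp$ of a subalgebra, and $\exp$, $\log$ are polynomial maps), so your conclusion $\overline{U}=L_u$ would force $U=L_u$, i.e.\ $\mathfrak{u}=\bar{\mathfrak{u}}$ and $\mathfrak{c}=0$, which is precisely what the lemma does not assert. Second, the assertion that the nonzero $\bar T$-weight part of $\mathfrak{l}_u=\bar{\mathfrak{u}}$ lies in $\mathfrak{u}$ does not follow from the section argument of (c): that argument only controls brackets $[\sigma(X),Y]$ for $Y\in\mathfrak{b}$, hence only $[\mathfrak{t},\mathfrak{u}]$, and says nothing about $[\bar{\mathfrak{t}},\bar{\mathfrak{u}}]$, since elements of $\bar{\mathfrak{u}}\setminus\mathfrak{u}$ need not lie in $\mathfrak{b}$. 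The missing input, which the paper takes from Witte Morris \cite{w2}, is the nontrivial commutator theorem $[\bar B,\bar B]\subset B\cap N=U$; this gives $[\bar{\mathfrak{t}},\bar{\mathfrak{u}}]\subset\mathfrak{u}$, and then complete reducibility of the $\bar T$-action on $\bar{\mathfrak{u}}$ yields $\bar{\mathfrak{u}}_\mu=[\bar{\mathfrak{t}},\bar{\mathfrak{u}}_\mu]\subset\mathfrak{u}$ for every nonzero weight $\mu$, whence $\bar{\mathfrak{u}}=\mathfrak{u}+\mathfrak{c}$ with $\mathfrak{c}$ the zero-weight space. Without this theorem, or some substitute for it, your proof of (a) does not close.
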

\begin{proof} The proof of this lemma is contained in the proof of Lemma \ref{lemma:oh-witte} (\cite{ow2}, Lemma 2.3). Therefore, we reproduce it for the convenience of the reader.  Write $\bar B=\bar T\rtimes\bar U$, where $\bar U$ is a subgroup of $N$ and $\bar T$ is conjugate, via element of $N$, to a subgroup of $A$. Here we use the well-known fact (\cite{bor}, Theorem 10.6 (4)) that a real connected solvable algebraic group $L$ is a semidirect product
  $$L=\bar T\ltimes \bar U$$
  of a torus, and a unipotent subgroup $\bar U$. We may assume that $\bar T\subset A$ (taking a conjugate, if necessary).  It is proved in \cite{w2} that $[\bar B,\bar B]\subset B\cap N$, which implies $\operatorname{Ad}_G{\bar T}(\bar{\mathfrak{u}})\subset \mathfrak{u}$. Also, $\bar T\subset A$. Clearly, the subalgebra $\bar{\mathfrak{u}}$ is $\operatorname{Ad}_G(\bar T)$-invariant, and the adjoint action of $\bar T$ on $\bar{\mathfrak{u}}$ is completely reducible. It follows that there is a subspace $\mathfrak{c}\subset\bar{\mathfrak{u}}$ such that
$$\operatorname{Ad}_G(\bar T)(\mathfrak{c})=0,\,\text{and}\,\,\mathfrak{u}+\mathfrak{c}=\bar{\mathfrak{u}}.$$
Therefore, $UC_N({\bar T})=\bar U$, so $\bar B=\bar T UC_N(\bar T)$. Let $\pi: AN\rightarrow A$ be the projection with the kernel $N$, and let $T=\pi(B)$. We get
$$T=\pi(B)\subset\pi(\bar B)=\bar T\Rightarrow C_N(T)\supset C_N(\bar T).$$
For any $b\in B$, there exist $t\in\bar T,u\in U$ and $c\in C_N(\bar T)$ such that $b=tuc$. But $uc\in N$, hence $t=\pi(b)\in T$, and, because $C_N(T)\supset C_N(\bar T)$, we obtain $c\in C_N(T)$. Therefore, $b\in TUC_N(T)$. Finally,  
$[\bar{\mathfrak{t}}, \bar{\mathfrak{u}}]\subset\bar{\mathfrak{u}}$. Now, one can easily see from the above considerations, that also   $[\mathfrak{t},\mathfrak{u}]\subset\mathfrak{u}$, the inclusions $\bar{\mathfrak{t}}\subset\mathfrak{a}$ and $\bar{\mathfrak{u}}\subset\mathfrak{n}$ also have been derived (on the Lie group level): the first one is clear, the second follows from $\bar U=UC_N(\bar T)$. 
The proof of both Lemma \ref{lemma:oh-witte} and Lemma \ref{lemma:alg-inclusion} is complete.  
\end{proof}

\subsection{Non-unimodularity of $B$}

\begin{theorem}\label{thm:non-unimodular-b} Let $B\subset AN$ be a compatible subgroup acting properly and co-compactly on a homogeneous space $G/H$ of reductive type such that $H$ is the semisimple part of the Levi factor of some parabolic subgroup of $G.$ Assume that $\text{rank}_{\mathbb{R}}G>\text{rank}_{\mathbb{R}}H$. Then $B$ cannot be unimodular.
\end{theorem}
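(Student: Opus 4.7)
The plan is to assume that $B$ is unimodular and to derive a contradiction by comparing a trace computation on $\mathfrak{b}$ with the vanishing of a certain character that comes from the parabolic type hypothesis on $H$.

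First I would establish the factorization $AN = B\cdot A_h N_h$ by applying Theorem \ref{thm:orbit-decomp} to the connected, simply connected, solvable group $AN$ with its connected subgroups $B$ and $A_h N_h = AN\cap H$ (the last identity coming from uniqueness in the compatible Iwasawa decomposition). To verify the hypotheses: the $AN$-orbit of $eH$ in $G/H$ is identified with $AN/A_h N_h$, which is closed because $AN\cdot H = (H\cdot AN)^{-1} = (K_h\cdot AN)^{-1}$ is the product of the compact set $K_h$ with the closed subgroup $AN$; the $B$-orbits in this closed, $B$-invariant subset are closed by properness; and $B\backslash AN/A_h N_h$ is the image of a closed set under the quotient map $G/H\to B\backslash G/H$, hence closed in the compact space $B\backslash G/H$ and so compact. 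Proposition \ref{prop:factorization} then gives $\mathfrak{a}+\mathfrak{n} = \mathfrak{b}+\mathfrak{a}_h+\mathfrak{n}_h$. Since $B\cap H$ is a compact subgroup of the simply connected solvable group $A_h N_h$, it is trivial, so $\mathfrak{b}\cap(\mathfrak{a}_h+\mathfrak{n}_h)=\mathfrak{b}\cap\mathfrak{h}=0$ and the sum is direct. Projecting $\mathfrak{a}+\mathfrak{n} = \mathfrak{b}\oplus(\mathfrak{a}_h+\mathfrak{n}_h)$ along $\mathfrak{n}$ and along $\mathfrak{a}$ respectively, and comparing dimensions via $d(G)=d(B)+d(H)$ from Theorem \ref{kk1}, yields
$$\mathfrak{a} = \mathfrak{t}\oplus\mathfrak{a}_h,\qquad \mathfrak{n} = \mathfrak{u}\oplus\mathfrak{n}_h.$$

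Next I compute the modular character of $\mathfrak{b}$. Because $\mathfrak{u}=\mathfrak{b}\cap\mathfrak{n}$ is an ideal of $\mathfrak{b}$ whose quotient $\mathfrak{b}/\mathfrak{u}\cong\mathfrak{t}$ is abelian, for any $Y\in\mathfrak{b}$ one has $\mathrm{tr}(\mathrm{ad}_\mathfrak{b}(Y)) = \mathrm{tr}(\mathrm{ad}(Y)|_\mathfrak{u})$. Write $Y = X+Z$ with $X=\pi(Y)\in\mathfrak{t}$ and $Z\in\mathfrak{n}$. By Lemma \ref{lemma:alg-inclusion}, $\mathrm{ad}(X)$ preserves $\mathfrak{u}$, hence so does $\mathrm{ad}(Z) = \mathrm{ad}(Y)-\mathrm{ad}(X)$; and $\mathrm{ad}(Z)|_\mathfrak{u}$ is nilpotent because $Z\in\mathfrak{n}$ is ad-nilpotent on $\mathfrak{g}$. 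Therefore
$$\mathrm{tr}(\mathrm{ad}_\mathfrak{b}(Y)) = \mathrm{tr}(\mathrm{ad}(X)|_\mathfrak{u}).$$
Both $\mathfrak{u}$ and $\mathfrak{n}_h$ are $\mathfrak{t}$-stable (the former by Lemma \ref{lemma:alg-inclusion}, the latter because it is even $\mathfrak{a}$-stable), so $\mathfrak{n}=\mathfrak{u}\oplus\mathfrak{n}_h$ is a decomposition of $\mathfrak{t}$-modules and
$$\mathrm{tr}(\mathrm{ad}(X)|_\mathfrak{u}) = \mathrm{tr}(\mathrm{ad}(X)|_\mathfrak{n}) - \mathrm{tr}(\mathrm{ad}(X)|_{\mathfrak{n}_h}) = 2\rho_G(X) - 2\rho_H(X) = \rho_P(X),$$
where $\rho_P := \sum_{\alpha\in\Delta^+\setminus\Delta_\Theta^+}\dim(\mathfrak{g}_\alpha)\,\alpha$ is the character of $\mathfrak{a}$ acting on the nilradical $\mathfrak{n}_\Theta$ of the parabolic. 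Unimodularity of $B$ is therefore equivalent to $\mathfrak{t}\subset\ker\rho_P$.

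To conclude I invoke the parabolic type hypothesis. The nilradical $\mathfrak{n}_\Theta$ is a finite dimensional representation of the semisimple Lie algebra $\mathfrak{h}$ via the adjoint action (since $\mathfrak{h}$ normalizes $\mathfrak{n}_\Theta$ inside the Levi decomposition), and any element of a semisimple Lie algebra acts with trace zero on any finite dimensional representation. Hence $\rho_P|_{\mathfrak{a}_h}\equiv 0$, so $\mathfrak{a}_h\subset\ker\rho_P$. Combined with $\mathfrak{t}\subset\ker\rho_P$ from the previous step and $\mathfrak{a} = \mathfrak{t}+\mathfrak{a}_h$, this forces $\rho_P\equiv 0$ on $\mathfrak{a}$. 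But $\mathrm{rank}_\mathbb{R} G > \mathrm{rank}_\mathbb{R} H$ implies $\Theta\subsetneq\Pi$, whence $\Delta^+\setminus\Delta_\Theta^+\neq\emptyset$; each such root is strictly positive on the open Weyl chamber $\mathfrak{a}^+$, so $\rho_P(X)>0$ for every $X\in\mathfrak{a}^+$, the desired contradiction. The step I expect to require the most care is the verification of the hypotheses of Theorem \ref{thm:orbit-decomp}, and in particular the closedness of the $AN$-orbit in $G/H$; once the factorization is in place, the remainder is a fairly direct Lie-algebraic manipulation.
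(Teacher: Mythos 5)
Your overall strategy is the same as the paper's: factor $AN=B\cdot A_hN_h$ via Theorem \ref{thm:orbit-decomp}, compute the modular character of $\mathfrak{b}$ as the character $\xi=\sum_{\alpha\in\Delta^+_x}\dim(\mathfrak{g}_\alpha)\alpha$ evaluated on the $\mathfrak{t}$-component, and show $\xi$ cannot vanish on both $\mathfrak{t}$ and $\mathfrak{a}_h$. Several of your sub-arguments are cleaner than the paper's: you get $\mathfrak{b}\cap\mathfrak{h}=0$ (hence $\mathfrak{u}\cap\mathfrak{n}_h=0$) from triviality of the compact stabilizer $B\cap H$ inside the simply connected solvable $A_hN_h$, where the paper invokes Jacobson--Morozov and Kostant's theorem on Cartan projections; and you get $\xi|_{\mathfrak{a}_h}=0$ from the fact that a semisimple Lie algebra acts tracelessly on the nilradical, where the paper runs a Weyl-group-invariance argument. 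Both shortcuts are correct.

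However, there is a genuine gap at the step ``$\mathfrak{n}=\mathfrak{u}\oplus\mathfrak{n}_h$.'' Projecting the factorization $\mathfrak{a}+\mathfrak{n}=\mathfrak{b}+(\mathfrak{a}_h+\mathfrak{n}_h)$ onto $\mathfrak{n}$ along $\mathfrak{a}$ gives $\mathfrak{n}=\pi_{\mathfrak{n}}(\mathfrak{b})+\mathfrak{n}_h$, and $\pi_{\mathfrak{n}}(\mathfrak{b})$ is in general strictly larger than $\mathfrak{u}=\mathfrak{b}\cap\mathfrak{n}$ (an element $Y+X\in\mathfrak{b}$ with $Y\in\mathfrak{t}$, $X\in\mathfrak{n}$ contributes $X$, which need not lie in $\mathfrak{u}$). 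Your dimension count from $d(G)=d(B)+d(H)$ only reproduces the directness of $\mathfrak{b}\oplus(\mathfrak{a}_h+\mathfrak{n}_h)$; it yields $\dim\mathfrak{n}=\dim\mathfrak{u}+\dim\mathfrak{n}_h+\dim(\mathfrak{t}\cap\mathfrak{a}_h)$, so your two claimed direct sums are each equivalent to $\mathfrak{t}\cap\mathfrak{a}_h=0$, which you have not shown. Consequently the identity $\mathrm{tr}(\mathrm{ad}(X)|_{\mathfrak{u}})=\mathrm{tr}(\mathrm{ad}(X)|_{\mathfrak{n}})-\mathrm{tr}(\mathrm{ad}(X)|_{\mathfrak{n}_h})$ is not justified as written. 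This is exactly the point where the paper uses the compatibility hypothesis $B\subset TUC_N(T)$ (which your argument never exploits beyond $[\mathfrak{t},\mathfrak{u}]\subset\mathfrak{u}$): from $\bar{\mathfrak{u}}=\mathfrak{u}+\mathfrak{c}$ with $\mathfrak{c}\subset\mathfrak{c}_t$ one gets $\mathfrak{n}=\mathfrak{n}_h+\mathfrak{u}+\mathfrak{c}_t$, hence an $\mathrm{ad}_X$-invariant decomposition $\mathfrak{n}=\mathfrak{n}_h\oplus\mathfrak{u}\oplus\mathfrak{m}'$ with $\mathfrak{m}'\subset\mathfrak{c}_t$ centralized by $\mathfrak{t}$. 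The extra summand then contributes zero trace and your formula $\mathrm{tr}(\mathrm{ad}_{\mathfrak{b}}(Y))=\xi(\pi(Y))$ is recovered; so the gap is reparable, but it requires this additional structural input rather than the dimension count you propose.
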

\begin{proof} 	
 Consider the inclusion $f: AN\hookrightarrow G$ and put $L_1=A_hN_h,\,L_2=AN, H_1=A_hN_h, H_2=H$. Apply Lemma \ref{kol}. This shows that $B$ acts properly on $AN/A_hN_h$. Also, $B$ acts  co-compactly on $AN/A_hN_h$. The latter easily follows from Theorem \ref{kk1}. Indeed, $d(B)+d(H)=d(G)$, since $B$ acts co-compactly on $G/H$. But $d(H)=d(A_hN_h)$, and $d(AN)=d(G)$. Hence, $d(B)+d(A_hN_h)=d(AN)$, and Theorem \ref{kk1} applies.
Therefore, every orbit of this action is closed. Applying Theorem \ref{thm:orbit-decomp} we obtain a decomposition
$$AN=B(A_hN_h)=LA_hN_h. \eqno (1)$$
Applying Proposition \ref{prop:factorization} and the equality $\mathfrak{l}=\bar{\mathfrak{t}}+\bar{\mathfrak{u}}$ we get
$$\mathfrak{a}+\mathfrak{n}=(\bar{\mathfrak{t}}+\bar{\mathfrak{u}})+(\mathfrak{a}_h+\mathfrak{n}_h).$$
Since $\bar{\mathfrak{t}}\subset\mathfrak{a}$ and $\bar{\mathfrak{u}}\subset\mathfrak{n}$ (by Lemma \ref{lemma:alg-inclusion}), one obtains
$$\mathfrak{n}=\mathfrak{n}_h+\bar{\mathfrak{u}}.\eqno (2)$$
%\noindent By definition, $U\subset B$. Therefore, it is easy to see that one can write 
  %$$B\subset U[B\cap TC_{N}(T)].$$
 \noindent Clearly, Lemma \ref{lemma:oh-witte} implies 
 
$$\mathfrak{b}\subset \mathfrak{u}+\mathfrak{t}+\mathfrak{c}_{t}. \eqno (3)$$
 Therefore, $(1)$ and $(3)$ imply
$$(\mathfrak{a}+\mathfrak{n})=\mathfrak{b}+(\mathfrak{a}_h+\mathfrak{n}_h)=(\mathfrak{t}+\mathfrak{u}+\mathfrak{c}_{t})+(\mathfrak{a}_{h}+\mathfrak{n}_{h})$$
and since $\mathfrak{c}_{t}\subset \mathfrak{n}$ we have
$$\mathfrak{t}+\mathfrak{a}_{h}=\mathfrak{a}.\eqno(4)$$
Consider the decomposition
  $$(\mathfrak{a}+\mathfrak{n})=\mathfrak{a}+\sum_{\alpha\in\Delta^+}\mathfrak{g}_{\alpha}.$$ 
	 Since $\mathfrak{h}$ is a semisimple part in some parabolic subalgebra of $\mathfrak{g}$ it follows that $\mathfrak{h}$ admits a compatible Iwasawa decomposition (see \cite{ov}, Section 1.5, Chapter 6)  $\mathfrak{h}=\mathfrak{k}_{h}+\mathfrak{a}_{h}+\mathfrak{n}_{h}$ 
	 %such that $\mathfrak{a}_{h} \subset \mathfrak{a}$ 
	 and
$$(\mathfrak{a}_{h}+\mathfrak{n}_{h})=\mathfrak{a}_{h}+\sum_{\alpha\in\Delta^{+}_{h}}\mathfrak{g}_{\alpha}.$$
	for a subset of positive roots $\Delta^{+}_{h}$ of $\Delta_{h}\subset \Delta,$ where $\Delta_{h}$ is the root system of $\mathfrak{h}$ and $\Delta$ is the root system of $\mathfrak{g}.$ Moreover, there exists $x_{h} \in \mathfrak{a}$ so that
	$$\Delta_{h}=\{ \alpha \in \Delta \ | \ \alpha (x_{h})=0 \}.$$
	Define $\Delta_{x}^{+} \subset \Delta$ by $\Delta_{x}^{+}:= \{ \alpha \in \Delta \ | \ \alpha (x_{h}) >0  \}$ and put
	$$\mathfrak{n}_{x}:= \sum_{\alpha\in\Delta^{+}_{x}}\mathfrak{g}_{\alpha}. $$
	Therefore (since $\Delta^+_x\cup\Delta^+_h=\Delta^+$) we have a decomposition 
	$$\mathfrak{n}=\mathfrak{n}_{h}+\mathfrak{n}_{x}.\eqno (5)$$
	Since $\mathfrak{n}_{h}$ is given by some subset of root spaces of $\mathfrak{g}$, it follows that 
	$$[\mathfrak{t},\mathfrak{n}_{h}] \subset \mathfrak{n}_{h}.$$
	Thus for any $Y\in\mathfrak{t}$ the decomposition $(5)$ is $\operatorname{ad}_Y$-invariant.
	Now we will get one more $\operatorname{ad}_Y$-invariant decomposition. By $(2)$, and the decomposition  $\bar{\mathfrak{u}}=\mathfrak{u}+\mathfrak{c}$ combined with the obvious inclusion $\mathfrak{c}\subset\mathfrak{c}_t$ we obtain
	$$\mathfrak{n}=\mathfrak{n}_h+\mathfrak{u}+\mathfrak{c}_t. \eqno (6)$$
	The decomposition $(6)$ is $\operatorname{ad}_Y$-invariant, although not direct. We understand $\operatorname{ad}_Y$-invariance in the sense that subspaces $\mathfrak{n}_h$ and $\mathfrak{u}+\mathfrak{c}_t$ are $\operatorname{ad}_Y$-invariant, which  follows from $[\mathfrak{t},\mathfrak{u}]\subset\mathfrak{u}$ (Lemma \ref{lemma:alg-inclusion}), the assumptions on $\mathfrak{h}$ and $[\mathfrak{t},\mathfrak{c}_t]=0$. Note that $\mathfrak{u}\cap\mathfrak{n}_h=\{0\}$, since $U\subset B$ and $B$ acts properly on $G/H$. In greater detail, we argue as follows.
\noindent
If $\mathfrak{n}_{h} \cap \mathfrak{u}$ is not trivial,  it contains $\mathbb{R}X$ for some nonzero $X\in \mathfrak{n}.$ It follows from the Jacobson-Morozov Theorem (Theorem \ref{thm:j-m}) that there exists an $\mathfrak{s}\mathfrak{l}_2$-triple that contains $X.$ Since the Cartan projection of $\tilde{N}$ equals the Cartan projection of $\tilde{A}$ for any semisimple, connected Lie group $S$ with the Iwasawa decomposition $S=\tilde{K}\tilde{A}\tilde{N}$ (see Theorem 5.1 in \cite{kos}), we see that $\mu (H) \cap \mu (B)$ is not bounded, a contradiction (Theorem \ref{kb}).

Let $\mathfrak{m}=(\mathfrak{n}_h+\mathfrak{u})\cap\mathfrak{c}_t$. It is straightforward to see that this subspace is $\operatorname{ad}_Y$-invariant. Writing down an $\operatorname{ad}_Y$-invariant decomposition $\mathfrak{c}_t=\mathfrak{m}\oplus\mathfrak{m}'$ one obtains one more $\operatorname{ad}_Y$-invariant (direct) decomposition
$$\mathfrak{n}=\mathfrak{n}_h\oplus\mathfrak{u}\oplus\mathfrak{m}'.\eqno (7)$$
Finally, $(5)$ and $(7)$ together yield, for any $Y \in \mathfrak{t}$,  two decompositions of $\mathfrak{n}$ into invariant subspaces of the endomorphism $\operatorname{ad}Y:\mathfrak{n} \rightarrow \mathfrak{n},$ namely:
	$$\mathfrak{n}=\mathfrak{n}_{h}+\mathfrak{n}_{x} \ \textrm{and} \ \mathfrak{n}=\mathfrak{n}_{h}+\mathfrak{u}+\mathfrak{m}'. \eqno (8)$$
	We will use $(8)$ as follows. Notice that for any $Y\in\mathfrak{t}$ one has $\operatorname{ad}_Y(\mathfrak{b})\subset\mathfrak{b}$. Indeed, 
	$$\operatorname{ad}_Y(\mathfrak{t}+\mathfrak{u}+\mathfrak{c}_t)\subset\mathfrak{u}\subset\mathfrak{b}.$$
	From this and $(8)$, as well as the fact that the trace does not depend on the basis we obtain 
	$$\operatorname{Tr}((\operatorname{ad}_Y)|_{\mathfrak{b}})=\operatorname{Tr}((\operatorname{ad}_Y)|_{\mathfrak{u}})=\operatorname{Tr}((\operatorname{ad}_Y)|_{\mathfrak{n}_x}). \eqno (9)$$

\noindent
	Recall that we assume that  $\mathfrak{b}$ must be unimodular. We will show that there exists $Z=Y+X\in\mathfrak{b}$ such that:
	\begin{itemize} 
	\item $Y\in\mathfrak{t},\,X\in\mathfrak{c}_t\subset\mathfrak{n}$,
	\item $\operatorname{Tr}((\operatorname{ad}_Y)|_{\mathfrak{n}_x})\not=0$.
	\end{itemize}
	Since $\operatorname{ad}_X$ is nilpotent, the latter yields 
	$$\operatorname{Tr}((\operatorname{ad}_Z)|_{\mathfrak{b}})=\operatorname{Tr}((\operatorname{ad}_Y)|_{\mathfrak{b}})=\operatorname{Tr}((\operatorname{ad}_Y)|_{\mathfrak{n}_x})\not=0.$$

	Therefore, for such $Z\in\mathfrak{b},$ $\operatorname{Tr}(\operatorname{ad}Z|_{\mathfrak{b}})\not=0.$ This shows that $\mathfrak{b}$ cannot be unimodular, and we arrive at a contradiction. It remains to show the existence of $Z$. We complete the argument in the four steps below.
	
	\noindent {\em\bf Step 1.} Recall that we denote by $W$  the Weyl group of $\mathfrak{g}.$ For $Y\in \mathfrak{a},$ $w\in W_g$ and $\alpha \in \Delta$ we have $\alpha (Y) = (w\alpha) (wY).$ Indeed, since $W$ acts on $\mathfrak{a}$ by orthogonal transformations (with respect to the Killing form $\mathcal{K}$ of $\mathfrak{g}$) we obtain
	$$\alpha (Y) = \mathcal{K}(H_{\alpha}, Y) = \mathcal{K}(wH_{\alpha}, wY)=\mathcal{K}(H_{w\alpha}, wY)=w\alpha (wY),$$
	where $H_{\alpha}\in \mathfrak{a}$ denotes the root vector of $\alpha.$
	
\noindent {\em\bf Step 2.} Let $W_{h}$ be the Weyl group of $\mathfrak{h}.$ Notice that $W_{h}$ is a subgroup of the Weyl group $W$ (since $\Delta_{h}\subset \Delta$) and for any $w\in W_{h},$ we have
	$$w(\Delta_{x}^{+})=\Delta_{x}^{+}.$$ To see that, notice first, that for any $\alpha \in \Delta_{h}$
	$$\alpha (x_{h})=0 \ \Leftrightarrow \ s_{\alpha} (x_{h})=x_{h},$$
	where $s_{\alpha} \in W_{h}$ denotes the reflection induced by $\alpha.$ Now it follows from Step 1 that
	$$0<\alpha (x_{h}) = w\alpha (wx_{h}) = w\alpha (x_{h}).$$
	Therefore $\alpha \in \Delta_{x}^{+}$ if and only if $w\alpha \in \Delta_{x}^{+}.$
	
\noindent {\em\bf Step 3.} Take $\xi:=\sum_{\alpha\in \Delta_{x}^{+}}a_{\alpha}\alpha,$ where $a_{\alpha}=dim(\mathfrak{g}_{\alpha}).$ Take $w\in W$ By Proposition \ref{prop:weyl}, $W$ is isomorphic to $N_{K}(\mathfrak{a})/Z_{K}(\mathfrak{a})$ and thus there exists $k\in K$ such that $w=Adk|_{\mathfrak{a}}.$ Since 
	$$X\in \mathfrak{g}_{\alpha} \ \Leftrightarrow \ \forall_{H\in \mathfrak{a}} \ [H,X]=\alpha (H)X \ \Leftrightarrow \ \forall_{H\in \mathfrak{a}} \ \operatorname{Ad}(k)([H,X])=\operatorname{Ad}(k)(\alpha (H)X)$$
	$$\Leftrightarrow \ \forall_{H\in \mathfrak{a}} \ [wH, \operatorname{Ad}(k)(X)]=\alpha (H)\operatorname{Ad}(k)(X) \ \Leftrightarrow (**)$$
	It follows from  Step 1 that
	$$(**) \ \Leftrightarrow \ \forall_{H\in \mathfrak{a}} \ [wH, \operatorname{Ad}(k)(X)]=w\alpha (wH)\operatorname{Ad}(k)(X) $$
	$$\Leftrightarrow \ \forall_{H\in \mathfrak{a}} \ [H, \operatorname{Ad}(k)(X)]=w\alpha (H)\operatorname{Ad}(k)(X) \ \Leftrightarrow \ \operatorname{Ad}(k)(X)\in \mathfrak{g}_{w\alpha}.$$
	Therefore
	$$\textrm{dim}(\mathfrak{g}_{\alpha})=\textrm{dim}(\operatorname{Ad}(k)(\mathfrak{g}_{\alpha}))=\textrm{dim}(\mathfrak{g}_{w\alpha}).$$
This implies that for $w\in W_{h}$ we have $w\xi=\xi,$ since $w(\Delta_{x}^{+})=\Delta_{x}^{+}.$ Therefore $\xi^{\ast}$ (that is, the vector dual to $\xi$ with respect to the Killing form of $\mathfrak{g}$) is perpendicular to $\mathfrak{a}_{h},$ because $\mathfrak{a}_{h}$ is spanned by $\{ \alpha^{\ast} \ | \ \alpha\in \Delta_{h}  \}.$
	
\noindent {\em\bf Step 4.} Note that $\mathfrak{a}=\mathfrak{t}+\mathfrak{a}_{h}$ and $\mathfrak{a}_{h} \neq \mathfrak{a}$ (this is the assumption $\text{rank}_{\mathbb{R}}G>\text{rank}_{\mathbb{R}}H$, compare the Calabi-Markus phenomenon, that is, Theorem \ref{thm:calabi-m})). It follows from Step 3 that there exists $Y\in \mathfrak{t}$ that is not perpendicular to $\xi^{\ast}.$ Moreover
	$$\operatorname{Tr}(\operatorname{ad}Y|_{\mathfrak{n}_{x}})=\xi^{\ast} (Y)$$
 	is obviously nonzero.
	
 Finally, it remains to prove that having $Y\in\mathfrak{t}$ with the property $\operatorname{Tr}(\operatorname{ad}_Y|_{\mathfrak{n}_x})\not=0$ there exists $Z=Y+X\in\mathfrak{b}$ with nilpotent $X$. Note that $AN$ is a semidirect product of $A$ and $N$, therefore, the projection $\pi: AN\rightarrow A$ onto the first factor, is a homomorphism. Note that $\pi(B)=T$ (see the proof of Lemma \ref{lemma:alg-inclusion}). 
 It follows that $d\pi:\mathfrak{a}+\mathfrak{n}\rightarrow\mathfrak{a}$ is a projection as well. Hence, $Y=d\pi(Y+X)$, where $X\in\mathfrak{n}$, as required. The proof is complete.
 \end{proof}

 \subsection{The property $B\subset AN$ and completion of proof}

 In general, for the syndetic hull, the inclusion $B\subset AN$ does not hold. However, we will show that we may assume this in our context, that is, when $B$ is a syndetic hull of a discrete solvable subgroup $\Gamma$ which acts properly and co-compactly on $G/H$. To do this, we need some preparations.
 Let $G$ be a real semisimple and connected Lie group with an Iwasawa decomposition $G=KAN$. Recall that an element $g\in G$ is called
\begin{itemize}
\item {\it hyperbolic}, if $g$ is conjugate to an element in $A$,
\item {\it unipotent}, if $g$ is conjugate to an element in $N$,
\item {\it elliptic}, if $g$ is conjugate to an element in $K$.
\end{itemize}
In what follows we will use the following facts from \cite{iw} (see Subsections 10.2-10.9 in this paper).
\begin{lemma}[\cite{iw}]\label{lemma:Jordan} Each $g\in G$ has a unique decomposition
$$g=auc$$
where $a$ is hyperbolic, $u$ is unipotent and $c$ is elliptic. Moreover:
\begin{itemize}
\item $a,u$ and $c$ commute,
\item $a,u,c\in\overline{\langle g\rangle}$, where $\overline{\langle g\rangle}$ denotes the Zariski closure of $\langle g\rangle$.
\end{itemize}
\end{lemma}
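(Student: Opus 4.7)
The plan is to reduce the statement to the classical real Jordan decomposition in a linear group and then use the structure theory of real reductive Lie groups to identify the factors with conjugates into $A$, $N$ and $K$.

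First, I would pass to the adjoint representation $\mathrm{Ad}: G\to GL(\mathfrak{g})$. Because $G$ is connected and semisimple, $\mathrm{Ad}(G)$ is the identity component of the real points of the linear algebraic $\mathbb{R}$-group $\mathrm{Aut}(\mathfrak{g})$, and the kernel of $\mathrm{Ad}$ is the (finite) center of $G$. This reduces the problem to a statement about an element $x=\mathrm{Ad}(g)$ in a real linear algebraic group $\mathbf{H}\subset GL(\mathfrak{g})$.

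Second, I would invoke the real multiplicative Jordan decomposition in $GL(\mathfrak{g},\mathbb{R})$: any $x\in GL(\mathfrak{g},\mathbb{R})$ has a unique expression $x=x_h x_e x_u$, where $x_h$ is diagonalizable over $\mathbb{R}$ with positive eigenvalues, $x_e$ is semisimple with all complex eigenvalues of modulus $1$, $x_u$ is unipotent, and the three commute pairwise. This follows from the usual Jordan decomposition over $\mathbb{C}$ by splitting the semisimple part according to the absolute value of its eigenvalues; uniqueness is standard. The key added input is Chevalley's theorem: each of $x_h$, $x_e$, $x_u$ is a polynomial (without constant term, in $x$ and $x^{-1}$) in $x$. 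Consequently each component lies in the Zariski closure $\overline{\langle x\rangle}$, and in particular in $\mathbf{H}$ since $\mathbf{H}$ is Zariski closed.

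Third, I would lift the decomposition from $\mathrm{Ad}(G)$ back to $G$. Using that $\mathrm{Ad}$ has finite central kernel and that $\overline{\langle g\rangle}$ surjects onto $\overline{\langle \mathrm{Ad}(g)\rangle}$, a standard argument (exploiting that a unipotent element has no nontrivial roots of unity among its eigenvalues, and that hyperbolic elements sit in a $1$-parameter group $\exp(tX)$ with $X$ real diagonalizable) produces unique lifts $a,u,c\in\overline{\langle g\rangle}\subset G$ that still commute and satisfy $g=auc$, with $a$ $\mathrm{Ad}$-hyperbolic, $u$ $\mathrm{Ad}$-unipotent, $c$ $\mathrm{Ad}$-elliptic.

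Fourth, I would identify the three factors with conjugates into $A$, $N$, $K$. For $u$: since $\mathrm{Ad}(u)$ is unipotent, $u=\exp(X)$ for some nilpotent $X\in\mathfrak{g}$, and by Jacobson-Morozov together with conjugacy of maximal $\mathbb{R}$-split nilpotent subalgebras, $X$ is $\mathrm{Ad}(G)$-conjugate into $\mathfrak{n}$, so $u$ is conjugate into $N$. For $a$: write $a=\exp(Y)$ with $Y\in\mathfrak{g}$ real diagonalizable in $\mathrm{ad}$; then $Y$ lies in some maximal $\mathbb{R}$-split abelian subalgebra of $\mathfrak{p}'$, and all such are $G$-conjugate to $\mathfrak{a}$, hence $a$ is conjugate into $A$. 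For $c$: $\mathrm{Ad}(c)$ preserves a positive definite inner product on $\mathfrak{g}$ (the one attached to a Cartan involution commuting with $c$, which exists because $c$ lies in a compact subgroup of $G$), so $c$ sits in a compact subgroup of $G$, and by conjugacy of maximal compacts $c$ is conjugate into $K$.

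The main obstacle I anticipate is the third step: producing unique lifts of the Jordan components from $\mathrm{Ad}(G)$ to $G$ that still lie in $\overline{\langle g\rangle}$. The finite central kernel of $\mathrm{Ad}$ introduces an a priori ambiguity, and the argument has to use the distinctive spectral properties of hyperbolic/unipotent/elliptic elements (each class being closed under positive powers in a suitable sense) to eliminate it, exactly as in Iwasawa's treatment cited in \cite{iw}.
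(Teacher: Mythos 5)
The paper does not prove this lemma at all: it is quoted verbatim from Iozzi--Witte Morris \cite{iw} (Subsections 10.2--10.9), so there is no in-paper argument to compare against. Your proposal is essentially the standard proof of the real Jordan decomposition that one finds in that reference (and in Eberlein or Helgason): algebraic Jordan decomposition $g=g_sg_u$, further splitting of the semisimple part into a hyperbolic and an elliptic factor, functoriality with respect to Zariski-closed subgroups to get membership in $\overline{\langle g\rangle}$, and conjugacy of maximal split tori, maximal unipotent subgroups, and maximal compact subgroups to identify the three factors. Since the paper's $G$ is already assumed linear, your detour through $\mathrm{Ad}$ and the lifting step in your third paragraph (which you correctly identify as the delicate point) can largely be avoided by working directly inside $GL(V)$ and using that $G$ has finite index in the real points of its Zariski closure.

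One step as written would not go through: you attribute to Chevalley's theorem the claim that \emph{all three} factors $x_h,x_e,x_u$ are polynomials in $x$ and hence lie in $\overline{\langle x\rangle}$. Chevalley's polynomial expression applies to the algebraic Jordan components $x_s$ and $x_u$ only; the further splitting $x_s=x_hx_e$ by modulus of eigenvalues is not an algebraic (polynomial) operation, and membership of $x_h$ and $x_e$ in $\overline{\langle x\rangle}$ has to be obtained differently --- namely by observing that $\overline{\langle x_s\rangle}$ is a diagonalizable group $T$ and invoking the decomposition $T=T_{split}\cdot T_{cpt}$ (exactly the paper's Lemma \ref{lemma:torus}), which exhibits $x_h\in T_{split}$ and $x_e\in T_{cpt}$ up to the finite intersection. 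Also note that even for $x_s$ and $x_u$, being a polynomial in $x$ does not by itself place an element in the Zariski closure of the cyclic \emph{group} $\langle x\rangle$; the standard route is the functoriality of the Jordan decomposition for algebraic subgroups applied to $\overline{\langle x\rangle}$. With these two repairs the argument is the standard one and is correct.
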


\noindent
The decomposition $g=auc$ is called {\it the real Jordan decomposition}.
In our context, a real algebraic group $T$ is a torus, if $T$ is abelian and Zariski connected, and every element of $T$ is semisimple. A torus $T$ is $\mathbb{R}$-{\it split}, if every element of $T$ is diagonalizable.
\begin{lemma}[\cite{w}]\label{lemma:torus} Let $T$ be a torus. If $T_{split}$ is the maximal $\mathbb{R}$-split subtorus of $T$, and $T_{cpt}$ is the maximal compact subtorus of $T$, then 
$$T=T_{split}\cdot T_{cpt}$$
and $T_{split}\cap T_{cpt}$ is finite.
\end{lemma}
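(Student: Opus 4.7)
The plan is to translate the statement to the character lattice of $T$ and then to extract the decomposition from a dimension count. Let $X = X^{\ast}(T_{\mathbb{C}})$ denote the character lattice of $T$, a free $\mathbb{Z}$-module of rank $\dim T$ on which the Galois involution $\sigma \in \mathrm{Gal}(\mathbb{C}/\mathbb{R})$ acts. Recall the classical dictionary for real algebraic tori: a subtorus $S \subseteq T$ is $\mathbb{R}$-split iff $\sigma$ acts trivially on the corresponding quotient $X^{\ast}(S)$ of $X$, and $S(\mathbb{R})$ is compact iff $\sigma$ acts as $-1$ on $X^{\ast}(S)$.

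First I would observe that, because $\sigma^{2}=1$ and we work over a field of characteristic zero, the rational vector space $X \otimes \mathbb{Q}$ splits as a direct sum
$$X \otimes \mathbb{Q} = V^{+} \oplus V^{-}$$
of the $(+1)$- and $(-1)$-eigenspaces of $\sigma$. The dictionary above identifies $\dim T_{split}$ with $\dim_{\mathbb{Q}} V^{+}$ and $\dim T_{cpt}$ with $\dim_{\mathbb{Q}} V^{-}$, so
$$\dim T_{split} + \dim T_{cpt} = \dim T. \eqno(\ast)$$

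Next I would analyse the multiplication morphism $\mu\colon T_{split} \times T_{cpt} \to T$. As a homomorphism of algebraic groups, its image is Zariski closed in $T$; by $(\ast)$ this image has full dimension, and because $T$ is Zariski-connected, the image must coincide with $T$. This yields $T = T_{split} \cdot T_{cpt}$. The kernel of $\mu$ is the antidiagonal copy of $T_{split} \cap T_{cpt}$, which by $(\ast)$ is zero-dimensional, hence finite.

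The step I expect to be the main obstacle is making the eigenspace identification fully rigorous: it rests on the classical classification of real algebraic tori via the Galois action on the character lattice, which is standard but nontrivial to derive from scratch. Once that input is available, the remaining arguments reduce to routine dimension considerations combined with Zariski-connectedness of $T$ and the closedness of images of algebraic group homomorphisms.
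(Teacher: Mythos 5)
The paper does not prove this lemma at all: it is quoted verbatim from Witte Morris \cite{w} (see also Lemma 10.3 of \cite{iw}), so there is no internal argument to compare against. Your route through the Galois action on the character lattice is the standard way to prove it, and the dictionary you invoke (split $\leftrightarrow$ trivial $\sigma$-action, compact/anisotropic $\leftrightarrow$ $\sigma=-1$, hence $\dim T_{split}+\dim T_{cpt}=\dim T$) is correct. Two points in the execution need repair, though.

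First, the dimension count as written is circular: you deduce that the image of $\mu$ has full dimension ``by $(\ast)$'', which already presupposes that $\ker\mu$ is finite, and then you deduce finiteness of $\ker\mu$ ``by $(\ast)$''. The finiteness of $T_{split}\cap T_{cpt}$ should come first and from a different source: the identity component of $T_{split}\cap T_{cpt}$ is a subtorus that is simultaneously $\mathbb{R}$-split and anisotropic, and a positive-dimensional split torus surjects onto $\mathbb{G}_m$, which is not anisotropic; hence that identity component is trivial and the intersection is finite. Only then does $(\ast)$ give $\dim(\mathrm{im}\,\mu)=\dim T$ and, with Zariski-connectedness and closedness of the image, $T=T_{split}\cdot T_{cpt}$ as algebraic groups. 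Second, the lemma as used in the paper is a statement about the group of real points, and surjectivity of $\mu$ as a morphism of algebraic groups does not formally imply surjectivity on $\mathbb{R}$-points (there is a potential obstruction in $H^1(\mathbb{R},\ker\mu)$). This is genuinely true here, but it requires an extra input, e.g.\ the classification of $\mathbb{Z}[C_2]$-lattices, which shows every real torus is a product of copies of $\mathbb{G}_m$, $S^1$ and $R_{\mathbb{C}/\mathbb{R}}\mathbb{G}_m$, for each of which the decomposition of real points is checked by hand ($\mathbb{C}^*=\mathbb{R}^*\cdot S^1$, etc.). With those two repairs your argument is a complete and self-contained proof of the cited lemma.
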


\noindent
We will also need the following well known fact (see \cite{vgs}).
\begin{proposition}\label{prop:lattice-intersection}
Let $\Gamma$ be a (co-compact) lattice in a locally compact topological group $L$, and $L_1$ be a normal subgroup. Let $\pi: L\rightarrow L/L_1$ be the natural projection onto the quotient group. Then $\Gamma\cap L_1$ is a (co-compact)  lattice in $L_{1}$ if and only if $\pi(\Gamma)\subset L/L_1$ is a (co-compact lattice) in $L/L_1$. 
\end{proposition}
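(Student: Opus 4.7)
The plan is to exploit the short exact sequence $1 \to L_1 \to L \to L/L_1 \to 1$ together with the associated fibration of quotients
$$L_1/(\Gamma\cap L_1) \hookrightarrow L/\Gamma \twoheadrightarrow (L/L_1)/\pi(\Gamma),$$
which is well-defined precisely when $\Gamma L_1$ is closed in $L$, equivalently, when $\pi(\Gamma)$ is discrete in $L/L_1$. Thus the first step is to establish these equivalences explicitly. The two bulleted facts from \cite{vgs} quoted earlier in the excerpt already give the chain: $\Gamma\cap L_1$ is a uniform lattice in $L_1$ iff $L_1\Gamma$ is closed in $L$ iff the restriction $\pi|_\Gamma : \Gamma \to L/L_1$ has discrete image, and the last condition is precisely discreteness of $\pi(\Gamma)$ in $L/L_1$.

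With these equivalences in hand, the co-compact version of the statement is immediate. Assume $\Gamma$ is a uniform lattice in $L$. If $\Gamma\cap L_1$ is a uniform lattice in $L_1$, then $\Gamma L_1$ is closed, and the quotient map $L/\Gamma \twoheadrightarrow L/(\Gamma L_1) \cong (L/L_1)/\pi(\Gamma)$ is a continuous surjection from the compact space $L/\Gamma$; hence the target is compact and $\pi(\Gamma)$ is a uniform lattice. Conversely, if $\pi(\Gamma)$ is a uniform lattice, then it is in particular discrete, so $\Gamma L_1$ is closed, and the cited equivalence returns that $\Gamma\cap L_1$ is a uniform lattice in $L_1$.

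For the general (non-uniform) lattice version, one invokes the Weil integration formula: once $\pi(\Gamma)$ is known to be discrete, a choice of compatible Haar measures yields the multiplicativity
$$\mathrm{vol}(L/\Gamma) = \mathrm{vol}\bigl(L_1/(\Gamma\cap L_1)\bigr) \cdot \mathrm{vol}\bigl((L/L_1)/\pi(\Gamma)\bigr),$$
so the finiteness of the left-hand side is equivalent to the simultaneous finiteness of the two factors on the right. I expect the main obstacle here to be verifying discreteness of $\pi(\Gamma)$ starting from the hypothesis that $\Gamma\cap L_1$ is only assumed to be a lattice (not necessarily uniform): this is the most delicate point in the theory of lattices in locally compact groups, and is treated in \cite{vgs} by applying a Fubini-type argument to the projection $L/\Gamma \to L/(L_1\Gamma)$ and exploiting the finite covolume of $L_1/(\Gamma\cap L_1)$ together with second countability of $L$.
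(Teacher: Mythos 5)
The paper does not actually prove this proposition: it is quoted as a known fact with a pointer to \cite{vgs}, so there is no argument of the authors' own to compare yours against. Your sketch is the standard proof, pivoting on the equivalence between closedness of $\Gamma L_1$ in $L$, discreteness of $\pi(\Gamma)$, and the lattice property of $\Gamma\cap L_1$ in $L_1$, and it is correct in outline. One point in the cocompact half needs tightening: the implication ``$\Gamma L_1$ closed $\Rightarrow$ $\Gamma\cap L_1$ is a uniform lattice in $L_1$'' is false for an arbitrary discrete $\Gamma$ (take $L=\mathbb{R}^2$, $L_1=\mathbb{R}\times\{0\}$, $\Gamma=\mathbb{Z}(0,1)$: here $L_1\Gamma$ is closed but $\Gamma\cap L_1$ is trivial), so the bullet you are quoting cannot be used as a black box. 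You must invoke the hypothesis that $L/\Gamma$ is compact, so that the closed orbit $L_1\Gamma/\Gamma\cong L_1/(\Gamma\cap L_1)$ is a closed subset of a compact space and hence compact; make that use of compactness explicit.

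In the non-uniform case your account of the delicate step is circular: you propose to obtain discreteness of $\pi(\Gamma)$ from the hypothesis that $\Gamma\cap L_1$ is a lattice by a Fubini argument applied to the projection $L/\Gamma\rightarrow L/(L_1\Gamma)$, but that fibration only exists once one knows $L_1\Gamma$ is closed, which is exactly the discreteness statement at stake. The correct input is the theorem (\cite{vgs}, Ch.~1, \S 4; equivalently Raghunathan's Theorem 1.13) that for $L_1$ closed and normal and $\Gamma$ discrete with $\Gamma\cap L_1$ a lattice in $L_1$, the set $L_1\Gamma$ is automatically closed; its proof is a recurrence argument using the finite $L_1$-invariant measure on $L_1/(\Gamma\cap L_1)$ and its invariance under conjugation by $\Gamma$, not a disintegration of $L/\Gamma$. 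Once discreteness is granted, your Weil-formula step does finish both directions at once (both factors are strictly positive, so finiteness of $\mathrm{vol}(L/\Gamma)$ forces both to be finite), provided you record the standing hypotheses that $L_1$ is closed and that the groups involved are second countable and unimodular where invariant measures are needed; these are automatic in the paper's application, where everything is a Lie group containing a lattice.
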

Now we complete the proof of Theorem \ref{thm:Gamma}. Thus, from this point, we assume that $\Gamma$ is a solvable discrete subgroup of $G$ with a syndetic hull $B$, whose existence is guaranteed by  Lemma \ref{lemma:syndetic}. Note again, that $L=\bar\Gamma=\bar B$  is real algebraic, and, hence it is a semidirect product
  $$L=\bar T\ltimes \bar U$$
  of a torus, and a unipotent subgroup $\bar U$.  
  By Lemma \ref{lemma:torus}, $\bar T=T_{split}\cdot T_{cpt}$. 
  %Therefore, any $l\in L$ has the unique real Jordan decomposition
  %$$l=t_at_cu,\,t_a\in T_{split}, t_c\in T_{cpt},\,u\in U.$$
  Clearly, $L_1=T_{split}\ltimes U$ is normal in $L$, and $L/L_1$ is compact. 
  Also, by Theorem \ref{thm:syndetic} $B$ is closed.  It follows that $B\cap L_{1}$ is normal in $B$ and $B/B\cap L_1$ is a closed subgroup in the (Lie) group  $L/L_1$. Therefore $B/B\cap L_1$ is compact. Referring to Proposition \ref{prop:lattice-intersection} we conclude that $\Gamma\cap (B\cap L_1)$ is a lattice. Also, since $B\cap L_1=B'$ is co-compact in $B$, hence $B'$ acts properly and co-compactly on $G/H$, and, therefore, so does $\Gamma'=B'\cap\Gamma$. It follows that without loss of generality we may assume the following:

 $$L=T_{split}\ltimes \bar U.$$
  In the latter case we have $l=ua$ for each element $l\in L$, and $u,a\in L$ (by Lemma \ref{lemma:Jordan}, since $L$ is Zariski closed). Then, $T_{split}$ is contained in some maximal split torus $\hat T$ of $G$, that is, in some subgroup conjugate to $A$. Replacing $L$ by a conjugate, we may assume that $T_{split}\subset A$. In other words we know that $L\cap A$ is a maximal split torus in $L$. Using this we can prove that 
  $$L=T_{split}(L\cap N) \eqno (*)$$
   (that is, $\bar U=L\cap N$) by simply repeating the proof of Lemma 10.4 by Iozzi and Witte Morris in \cite{iw}. For the convenience of the reader we repeat their argument. Given $l\in L$ we have $l=au$, and $a$ belongs to a split torus. It is known (from the general theory of solvable algebraic groups) that all maximal split tori of $L$ are conjugate via an element of $L\cap N$, so there is some $x\in L\cap N$ such that $x^{-1}ax\in A$. Then $\langle T_{split},x^{-1}ax\rangle$ being a subgroup of $A$, is a split torus. Thus, the maximality of $T_{split}$ implies that $x^{-1}ax\in  T_{split}$. Then, for $t=x^{-1}ax$ one obtains
   
    $$l=au=xtx^{-1}u=t(t^{-1}xt)x^{-1}u\in T_{split}(L\cap N).$$
    We conclude that $L=T_{split}(L\cap N)$, as required. It follows that $B\subset L\subset AN$. Now we complete the proof applying Theorem \ref{thm:non-unimodular-b}.
  
\section{Erratum}
In the published version of this paper {\it On solvable compact Clifford-Klein forms} [published in Proc. Amer. Math. Soc. 145(2017), 1819-1832] we have overseen a gap in Section 3.5. The inclusion $B\subset AN$ cannot be assumed, because the conclusion on p. 1830 that $\Gamma\cap (B\cap L_1)$ is a lattice, actually was not proved. However, the result of the paper (Theorem 1.2) is correct, and the proof goes through with a slight modification. Here is the correction. We use the notation and the references as in the considered article. 
Note that all the proofs in it go through without the assumption that $B$ has a co-compact lattice. This assumption is only used to get a contradiction with the unimodularity condition. However, by \cite{ow2} the following equality holds: $T_cB=T_cQ$, where $Q\subset AN$. Consider the natural homomorphisms 
$$\pi_1: \bar B=T_cT_{split}\bar U\rightarrow T_{split}/(T_c\cap T_{split}),\,\text{and}\,\pi_2: AN\rightarrow A.$$
One can easily check that $d\pi_1(\mathfrak{b})=d\pi_2(\mathfrak{q})\subset\mathfrak{t}_{split}$. Thus, one can apply the argument from the article to $Q$, and get $d\pi_2(\hat Z)=Y\not\in\mathfrak{a}_h$ for some $\hat Z\in\mathfrak{q}$, but then it will also yield $d\pi_1(Z)=Y\not\in\mathfrak{a}_h$ for some $Z\in\mathfrak{b}$ (now $\pi_1$ is used instead of $\pi$ in the article). We will use $Z\in\mathfrak{b}$ to show that $\mathfrak{b}$ is not unimodular. We know that $\bar{\mathfrak{b}}=\mathfrak{t}_c+\mathfrak{t}_{split}+\bar{\mathfrak{u}}$ and that $[\bar{\mathfrak{b}},\bar{\mathfrak{b}}]\subset\mathfrak{u}$. The subspace $\mathfrak{t}_c+\mathfrak{u}$ is a subalgebra in $\bar{\mathfrak{b}}$. The corresponding connected subgroup $R\subset \bar{B}$ has a co-compact lattice by  classical Mostow's theorem. Hence, $R$ is unimodular, and $\operatorname{ad}_{\mathfrak{t}_c}$-action on $\mathfrak{u}$ is trace-free. Now we argue as in the proof of Theorem 3.10 with the following modification. Since $\mathfrak{t}_{split}\subset\mathfrak{a},\mathfrak{t}_{split}+\bar{\mathfrak{u}}\subset\mathfrak{a}+\mathfrak{n}$ and $\bar{\mathfrak{u}}\subset\mathfrak{n}$, one can write $Z=O+Y+X$, where $O\in\mathfrak{t}_c,Y\in\mathfrak{t}_{split},Y+X\in\mathfrak{a}+\mathfrak{n}$. From this one obtains
$$\operatorname{Tr}(\operatorname{ad}_Z|_{\mathfrak{b}})=\operatorname{Tr}(\operatorname{ad}_{(Y+X)}|_{\mathfrak{b}}).$$
Using the equalities $\mathfrak{n}=\mathfrak{n}_h+\mathfrak{n}_x$ and $\mathfrak{n}=\mathfrak{n}_h+\mathfrak{u}+\mathfrak{c}$ one obtains
$$\operatorname{Tr}(\operatorname{ad}_{(Y+X)}|_{\mathfrak{b}})=\operatorname{Tr}(\operatorname{ad}_{(Y+X)}|_{\mathfrak{u}})=\operatorname{Tr}(\operatorname{ad}_{(Y+X)}|_{\mathfrak{n}_x}).$$
Since $X\in\bar{\mathfrak{u}}\subset\mathfrak{n}$, the operator $\operatorname{ad}_X$ is nilpotent on $\mathfrak{n}_x$. Thus, to repeat the proof of Theorem 1.2 we need to find $Z=O+Y+X$ such that $\operatorname{Tr}(\operatorname{ad}_Y|_{\mathfrak{n}_x})=\xi^*(Y)\not=0$. We take $Z$ such that $d\pi_1(Z)=Y\not\in\mathfrak{a}_h$.

Maciej Boche\'nski
Department of Mathematics and Computer Science, University of Warmia and Mazury, 
S\l\/oneczna 54, 10-710, Olsztyn, Poland
mabo@matman.uwm.edu.pl

Aleksy Tralle
Department of Mathematics and Computer Science, University of Warmia and Mazury, 
S\l\/oneczna 54, 10-710, Olsztyn, Poland
tralle@matman.uwm.edu.pl


\begin{thebibliography}{99}


\bibitem{b} Y. Benoist, {\it Actions propres sur les espaces homog\`{e}nes r\'{e}ductifs}, Ann. Math. 144 (1996), 315-347.

%\bibitem{bl} Y. Benoist, F. Labourie, {\it Sur les espaces homogenesmodeles de varietes compactes}, Publications Mathematiques del I.H.E.S., 76 (1992), 99-109.

\bibitem{bor} A. Borel, {\it Linear Algebraic Groups}, Springer, New York, 1991.

\bibitem{cm} D. H. Collingwood, W. M. McGovern, {\it Nilpotent Orbits in Semisimple Lie Algebras}, van Nostrand Reihold, New York, 1993.

\bibitem{mfj} D. Fried, W. Goldman, {\it Three-dimensional affine crystallographic groups}, Adv. Math. 47 (1983), 1-49.

\bibitem{iw} A. Iozzi, D. Witte Morris {\it Tessellations of homogeneous spaces of classical groups of real rank two}, Geom. Dedic., 103 (2004),  115-191.

\bibitem{kas} F. Kassel, {\it Deformation of proper actions on reductive homogeneous spaces}, Math. Ann. 353 (2012), 599-632.

\bibitem{kobr1} T. Kobayashi, {\it A necessary condition for the existence of compact Clifford-Klein forms of homogeneous spaces of reductive type}, Duke Math. J. 67 (1992), 653-664.

\bibitem{ko} T. Kobayashi, T. Yoshino {\it Compact Clifford-Klein forms of symmetric spaces revisited}, Pure Appl. Math. Quart 1 (2005), 603-684.

\bibitem{kob1} T. Kobayashi, {\it Criterion for proper actions on homogeneous spaces of reductive groups}, J. Lie Theory 6 (1996), 147-163.

\bibitem{kdef} T. Kobayashi, {\it Deformation of compact Clifford-Klein forms of indefinite-Riemannian homogeneous manifolds}, Math. Ann. 310 (1998), 394-408.

\bibitem{kobr2} T. Kobayashi, {\it Discontinuous groups acting on homogeneous spaces of reductive type,} Representation theory of Lie groups and Lie algebras (Fuji-Kawaguchiko, 1990), 59-75, World Sci. Publ., River Edge, (1992)

\bibitem{kobr3} T. Kobayashi, {\it Discontinuous groups and Clifford-Klein forms of pseudo-Riemannian homogeneous manifolds,} Algebraic and analytic methods in representation theory (Sanderborg, 1994), Perspectives in Mathematics, vol. 17, Academic Press, San Diego, (1996), 99-165.

\bibitem{ko1} T. Kobayashi, {\it On discontinuous groups acting on homogeneous spaces with non-compact isotropy subgroups}, J. Geom. Phys. 12 (1993), 133-144.

\bibitem{ko2} T. Kobayashi, {\it Proper action on a homogeneous space of reductive type}, Math. Ann. 285 (1989), 249-263.

\bibitem{kos} B. Kostant, {\it On convexity, the Weyl group and the Iwasawa decomposition}, Ann. Sci. Ecole. Norm. S. 6 (1973), 413-455.

\bibitem{lazim} F. Labourie, R. J. Zimmer, {\it On the existence of cocompact lattices for SL(n)/SL(m)}, Mathematical Research Letters 2 (1995), 75-77.

\bibitem{lamo} F. Labourie, S. Mozes, R.J. Zimmer, {\it On manifolds locally modelled on non-Riemannian homogeneous spaces}, Geometric and Functional Analysis 5 (1995), 955-965.

\bibitem{mar} G. Margulis, {\it Discrete Subgroups of Semisimple Lie Groups}, Springer, Berlin, 1991.

\bibitem{sha} Y. Shalom, {\it Rigidity, unitary representations of semisimple groups, and fundamental groups of manifolds with rank one transformation group}, Annals of Mathematics 152 (2000), 113-182.

\bibitem{ow2} H. Oh, D. Witte Morris, {\it Cartan decomposition subgroups of $SO(2,n)$}, Trans. Amer. Math. Soc. 356 (2004), 1-38.

\bibitem{ow} H. Oh, D. Witte Morris,  {\it Compact Clifford-Klein Forms of Homogeneous Spaces of $SO(2,n)$}, Geom. Dedic. 89 (2002),  25-57.

\bibitem{ov2} A. L. Onishchik, E. B. Vinberg {\it Lie Groups and Algebraic Groups,}  Springer, Berln, 1990.

\bibitem{ov} A. L. Onishchik, E. B. Vinberg {\it Lie groups and Lie algebras III,}  Springer, Berlin, 1994.

\bibitem{oni} A.L. Onishchik, {\it Decompositions of reductive Lie groups}, Matem. Sb. 80 (122) (1969), 556-599 (in Russian).

\bibitem{o} A. L. Onishchik, {\it On Lie groups acting transitively on compact manifold II}, Matem. Sb. 74 (1967), 398-419 (in Russian).

\bibitem{ooo} A. L. Onishchik, {\it Inclusion relations among transitive compact transformation groups},  Amer. Math. Soc. Transl. (2) 50 (1966), 5-58. MR 27 \# 3740.

\bibitem{o-book} A. L. Onishchik, {\it Compact Transformation Groups}, Johannes Ambrosius Barth, Leipzig, 1993

\bibitem{pr} V. Platonov and A. Rapinchuk, {\it Algebraic Groups and Number Theory}, Academic Press, Boston, 1994.

\bibitem{w-a} D. Witte, {\it Superrigid subgroups and syndetic hulls}, in: Rigidity in Dynamics and Geometry (M. Burger, A. Iozzi, Eds.), Spinger, 2000

\bibitem{w-book} D. Witte Morris, {\it Introduction to Arithmetic Groups}, http://people.uleth.ca/~dave.morris/

\bibitem{w2} D. Witte Morris, {\it Superrigidity of lattices in solvable Lie groups}, Invent. Math. 122 (1995), 147-193.

\bibitem{w} D. Witte Morris, {\it Tesselations of solvmanifolds}, Trans. Amer. Math. Soc. 350 (1998), 3767-3796.

\bibitem{vgs} E. Vinberg, V. Gorbatsevich, O. Shvartsman, {\it Discrete subgroups of Lie groups}, Itogi Nauki 21 (1988), 3-120 (in Russian).

\bibitem{zim} R. Zimmer, {\it Discrete subgroups and non-Riemannian homogeneous spaces}, J. Amer. Math. Soc. 7 (1994), 159-168.

\end{thebibliography}
\end{document}